\newtheorem{theorem}{Theorem}[section]
\newtheorem{lemma}[theorem]{Lemma}
\theoremstyle{definition}
\theoremstyle{remark}
\newtheorem{remark}[theorem]{Remark}
\numberwithin{equation}{section}
\definecolor{blck}{rgb}{0,0,0}
\definecolor{darkred}{rgb}{.6,.1,0}
\definecolor{blue}{rgb}{0,0,1}
\definecolor{red}{rgb}{1,0,0}
\newcommand{\jmp}[1]{[\![#1]\!]}                     % jump
\newcommand\rev[1]{\textcolor{blck}{#1}}
\newcommand{\Eh}{\mathscr{E}_h}
\def\calF{{\mathcal F}}
\def\calT{{\mathcal T}}
\def\calF{{\mathcal F}}
\begin{document}

\title{Optimally convergent hybridizable discontinuous Galerkin method for fifth-order Korteweg-de Vries type equations}

\author{Bo Dong}
\address{Department of
Mathematics, University of Massachusetts Dartmouth, 285 Old Westport Road, North Dartmouth, MA 02747, USA}
\curraddr{} \email{bdong@umassd.edu}
\thanks{The research of the third author was supported by
        the National Science Foundation (grant DMS-1419029). Corresponding author: B. Dong}

\author{Jiahua Jiang}
\address{Department of
Mathematics, University of Massachusetts Dartmouth, 285 Old Westport Road, North Dartmouth, MA 02747, USA}
\curraddr{} \email{jjiang@umassd.edu}
\thanks{}

\author{Yanlai Chen}
\address{Department of
Mathematics, University of Massachusetts Dartmouth, 285 Old Westport Road, North Dartmouth, MA 02747, USA}
\curraddr{} \email{yanlai.chen@umassd.edu}
\thanks{}

\subjclass[2000]{Primary 65M60, 65N30}

\date{}

\dedicatory{}

\begin{abstract}
We develop and analyze the first hybridizable discontinuous Galerkin (HDG) method for solving fifth-order Korteweg-de Vries (KdV) type equations. We show that the semi-discrete scheme is stable with proper choices of the stabilization functions in the numerical traces. For the linearized fifth-order equations, we prove that the approximations to the exact solution and its four spatial derivatives as well as its time derivative all have optimal convergence rates. The numerical experiments, demonstrating optimal convergence rates for both the linear and nonlinear equations, validate our theoretical findings.
\end{abstract}

\maketitle

\pagestyle{myheadings} \thispagestyle{plain} \markboth{B. DONG, J. JIANG, AND Y. CHEN}
{HDG for fifth-order KdV}

\section{Introduction}
%\label{sec:intro}

Natural phenomena modeled by nonlinear partial differential equations are ubiquitous, appearing in numerous areas of science and engineering such as plasma physics, fluid dynamics, nonlinear optics, quantum mechanics, mathematical biology and chemical kinetics etc.
Of particular interest to us is the class of nonlinear, dispersive partial differential equations called Korteweg-de Vries (KdV) equations. They model  the evolution of long, one-dimensional waves such as the shallow-water waves with weakly nonlinear restoring forces and the long internal waves in a density-stratified ocean etc.

Due to their pervasiveness in applications and theoretical studies, it is critical to devise efficient numerical schemes for KdV equations that have mathematically provable stability and accuracy.
This paper is a continuation of our systematic effort \cite{ChenCockburnDongHDG16, Dong17} toward that end. 
Indeed, we focus on the following fifth-order KdV type equation:
\begin{equation}
\label{eq:prob}
u_t + \alpha u_{xxx} + \beta u_{xxxxx} + \frac{\partial}{\partial x} F(u, u_x, u_{xx}) = f
\end{equation}
for $x\in \Omega:=(0, L)$ and $t\in (0, T]$ with the initial condition
\[
u(x,0) = u_0(x)
\]
and periodic boundary condition.
Here the coefficients $\alpha$, $\beta$ are real numbers with $\beta < 0$, $f\in L^2(\Omega)$ and $F$ is a smooth function.\\

The fifth-order KdV equation \eqref{eq:prob} is the generic model for studying shallow water waves having surface tension and acoustic waves in plasma \cite{AkylasYang95, HunterScheurle88, Nagashima79, KakutaniOno69,Kawahara72}. 
One characteristic feature is the existence of solitary wave solutions that do not vanish at infinity maintaining their shapes \cite{Beale91, Body91}. The first fifth-order KdV type equation introduced in literature is the Kawahara equation \cite{Kawahara72,Kato12} for which $F(u,  u_x, u_{xx})=Ku^2$ where $K$ is a nonzero constant. Later, other forms of $F$ with more \rev{complicated} nonlinearities appeared due to the need to capture solitary wave solutions. In applications, the \rev{form of $F$ used most often} is $F(u, u_x, u_{xx})= K u^{n+1}$  where $n\ge 1$ an integer.
The scheme developed in this paper can be extended to general form of $F$ in a trivial fashion. However, for simplicity we focus on the case where $F$ is a function of $u$ only.\\

The {\em acute challenge} in numerically resolving these equations are two-fold. First and foremost is the need for accurate long-time integration. To capture the structure of the characteristic soliton solutions over a long time-span, it is imperative for the scheme to have very low phase error which is a trademark of high-order methods. The second challenge originates from the demand for preservation of physical quantities such as mass and Hamiltonian.
Discontinuous Galerkin (DG) method emerges as a natural choice for this scenario thanks to that it is easily implementable to be high order, its capability of handling nonuniform meshes (e.g. for a moving mesh following the soliton) with variable degrees, and its flexibility in design allowing for purposely constructing the numerical fluxes to achieve preservation of particular physical quantities \cite{ChenCockburnDongDG16}. 
Indeed, in 2002,  the first local DG method was introduced for third- and fifth-order KdV equations \cite{YanShu02}. It was further studied in \cite{XuShu04, XuShu07, HuffordXing14, GuoXu15} and was shown to have approximate solutions with optimal convergence rates for linearized KdV equations \cite{XuShu12}. In \cite{ChengShu08}, another DG method was designed for KdV type equations based on
repeated integration by parts. This method has optimal
convergence rates for linearized KdV equations, but needs to use at least second-order polynomials for third-order equations and fourth-order polynomials for fifth-order equations. In \cite{BonaChenKarakashianXing13,ChenCockburnDongDG16,KarakashianXing16}, conservative DG methods were constructed for third-order KdV type equations to
preserve quantities such as the mass and the $L^2$-norm of the solutions. These methods have optimal convergence rates when approximate solutions have even polynomial degrees and suboptimal convergence rates when the polynomial degrees are odd.

Traditional DG methods, despite their prominent features, 
were criticized for having too many degrees of freedom and for not being efficiently implementable. As a response to these criticisms, the hybridizable discontinuous Galerkin (HDG) methods were designed. Their unique feature is that the globally coupled degrees of freedom are only those on the element interfaces. Hence, they are more advantageous than traditional DG methods for solving stationary equations or time-dependent problems that require global solvers. In addition to the dimension reduction, they usually produce optimally convergent approximations for both the primal and dual variables, a feature notably lacking for the traditional DG formulation. 
The dimension reduction in the globally coupled degrees of freedom has a remarkable consequence for the one-dimensional setting of the KdV equations.
In fact, {\em the size and bandwidth of the global system to be numerically solved are independent of the polynomial degrees in the finite element space.}
This feature renders the HDG method particular attractive in the setting of this paper since the high-order accuracy is indispensable for accurate long time integration.

The idea of utilizing HDG method for KdV type equations is not new. In \cite{ChenCockburnDongHDG16}, we developed the first class of HDG methods for solving stationary third-order linear equations and proved super-convergence results. In \cite{Dong17}, the HDG method is extended to the (nonlinear) third-order  KdV type equations, attaining rigorous optimal convergence of  approximate solutions to the primary variable and its two derivatives for the linearized equations.
This paper amounts to {\em the first} attempt to design HDG and analyze it for the fifth-order KdV type equations to the best of our knowledge. 
Indeed, we design a semi-discrete HDG scheme for the general nonlinear equation \eqref{eq:prob} and prove its $L^2$-stability. For the fully discrete scheme, we adopt an implicit time-marching approach to \rev{mitigate the severe time-step restriction}, and at each time step the HDG method to solve the resulting stationary fifth-order equation.
\rev{To ensure compatibility at the first step and avoid loss of accuracy during the time-marching}, the HDG solver for stationary fifth-order equations is also used to compute the initial approximations for the time discretization. For stationary fifth-order linear equations, we prove that the HDG approximations of the primary variable with its four spatial derivatives 
superconverge to an HDG projection of the solutions with order $k+2$ and the numerical traces superconverge with the order $2k+1$ when the polynomial degree of the approximate solutions $k>0$. This is obtained by using a special duality argument and Green's functions. 
For the time-dependent case, we 
combine several energy identities and  prove that the semi-discrete form has optimal convergence rates for the approximations to the primary variable as well as its four spatial derivatives and the time derivative. 

The remainder of the paper is structured as follows. In Section \ref{sec:mainresults}, we introduce the central theme of this paper, a new HDG method for the KdV equation \eqref{eq:prob}, and present the associated stability analysis. A priori error analyses for the linearized KdV equations are presented in Section \ref{sec:erroranalysis}.
We omit some details of the proof and postpone them until the \rev{appendices}. The numerical results collected in Section \ref{sec:numericaltest} corroborate our theoretical predictions and show optimal convergence of the scheme for both linear and nonlinear equations.

\section{The HDG scheme and its stability}
\label{sec:mainresults}

In this section, we define the HDG method before stating and proving the theoretical result on the stability of the scheme.

\subsection{The HDG scheme}

\subsubsection{Notation}
We first introduce a partition of the computational domain,
\[
\calT_h=\left\{I_i:=(x_{i-1}, x_i): i = 1, \dots, N\right\},
\]
with $0=x_0<x_1<\cdots<x_{N-1}<x_N=L$.
We use $\partial\calT_h:=\{ \partial I_i: i=1,\dots,N\}$ to denote  the boundaries of the intervals, and $\Eh:=\{x_i\}_{i=0}^N$ to denote the set of all nodes. We set $h_i = x_i - x_{i-1}$ and $h:=\max\{h_i,\; i=1,\dots,N\}$.

For any function $\zeta\in L^2(\partial\calT_h)$, we denote its values on $\partial I_i:=\{x^+_{i-1}, x^-_i\}$ by $\zeta(x_{i-1}^+)$ (or simply $\zeta^+_{i-1}$) and $\zeta(x_i^-)$ (or simply $\zeta^-_i$). Note that $\zeta(x_{i}^+)$ does not have to be equal to $\zeta(x_i^-)$. On the other hand, for any function $\eta\in L^2(\Eh)$, its value at $x_i$, $\eta(x_i)$ (or simply $\eta_i$) is uniquely defined. We denote by $(\varphi, v)_{I_i}$ the integral of the product of $\varphi$ and $v$ on the interval $I_i$, and $\langle \varphi,v n\rangle_{\partial I_i}$ denotes that on the boundary which, in the one-dimensional case, simply becomes $\varphi(x_i^-)v(x_i^-)n(x_i^-) +\varphi(x_{i-1}^+)v(x_{i-1}^+)n(x_{i-1}^+)$. Here $n$ denotes the outward unit normal to $I_i$, that is $n(x_{i-1}^+):=-1$ and $n(x_i^-):=1$. Finally, we will need the finite element space which, when restricted to any particular element $I_j$, is the space of polynomials of degree at most $k$:
\begin{equation*}
{W}_h^k = \{\omega \in L^2(\mathcal{T}_h): \quad  \omega|{{_{I_i}}} \in {P}_{k}(I_i) \quad\forall\; i=1, \cdots, N\}.
\end{equation*}

\subsubsection{Spatial discretization}
To define the semi-discretization of the equation \eqref{eq:prob}, we first rewrite the time-dependent fifth-order equation as the following first-order system:
\begin{subequations}
\begin{alignat}{1}
\label{eq:system}
&q - u_x \,=\, 0, \quad p - q_x\,=\, 0,\quad r - p_x \,=\, 0, \quad s - r_x \,=\, 0, \quad u_t+ \calF(s,p,u)_x \,=\,f,
\intertext{where $\calF(s,p,u) = \alpha p + \beta s + F(u)$, and the boundary conditions are} 
\label{eq:BC_P}
&\qquad\qquad\omega(0, t)=\omega(L, t)\quad \mbox{ for } \omega=u, q, p, r, \mathcal{F}. 
\end{alignat}
\end{subequations}

The HDG scheme provides approximations
\[
(u_h, q_h, p_h, r_h, s_h, \widehat{u}_h, \widehat{q}_h, \widehat{p}_h, \widehat{r}_h, \widehat{s}_h) \in \left[W_h^{k}\right]^5 \times \left[ L^2(\partial\calT_h) \right]^5
\]
to
\[
(u|_\Omega,q|_\Omega,p|_\Omega, r|_\Omega, s|_\Omega, u|_{\Eh},q|_{\Eh},p|_{\Eh}, r|_{\Eh},s|_{\Eh}).
\]
Indeed, to determine these approximations, assuming that we are given the boundary values  $\{\widehat{u}_{hi}\}_{i=0}^N$, $\{\widehat{q}_{hi}\}_{i=0}^N$, and $\{\widehat{p}^{\;-}_{hi}\}_{i=1}^N$ which are the only globally coupled unknowns, 
we solve the equation  \eqref{eq:system} locally on each element by adopting a Galerkin method. More specifically, on the element $I_i$, we give $f$ and the boundary data  {$\widehat{u}_{h\, i-1}, \widehat{u}_{h\,i}$, {$\widehat{q}_{h\, i-1}, \widehat{q}_{h\,i}$, and $\widehat{p}^{\; -}_{h\, i}$} and take $(u_h, q_h, p_h, r_h, s_h) \in \left[P_{k}(I_i)\right]^5$ to be the solution of the equations
\begin{subequations}
\label{eq:scheme},
\begin{alignat}{2}
\label{eq:scheme1}
({q}_h,{v})_{I_i} + (u_h,v_x)_{I_i} - \langle \widehat{u}_h,{v}n \rangle_{\partial I_i} & = 0, \\
\label{eq:scheme2}
({p}_h,{z})_{I_i} + (q_h,z_x)_{I_i} - \langle \widehat{q}_h,{z}n \rangle_{\partial I_i} & = 0,\\
\label{eq:scheme3}
({r}_h,{w})_{I_i} + (p_h,w_x)_{I_i} - \langle \widehat{p}_h,{w}n \rangle_{\partial I_i} & = 0, \\
\label{eq:scheme4}
({s}_h,{\phi})_{I_i} + (r_h,\phi_x)_{I_i} - \langle \widehat{r}_h,{\phi}n \rangle_{\partial I_i} & = 0,\\
\label{eq:scheme5}
({u}_{ht},{\psi})_{I_i} - (\calF(s_h,p_h,u_h), \psi_x)_{I_i} + \langle \widehat{\calF}_h, {\psi}n \rangle_{\partial I_i} & = (f, \psi)_{I_i},
\end{alignat}
\end{subequations}
for all $(v, z, w, \phi, \psi)\,\in\,(P_k(I_i))^5$. 
We close the local system by defining the remaining  numerical traces as follows
\begin{subequations}
\label{eq:fluxes}
%\begin{cases}
\begin{alignat}{2}
\label{eq:fluxp+}
\widehat{p}_h^{\;+} & = p_h^+ + \tau_{pu}^+\,( {\widehat{u}_{h}} - u_h^+)\,n^+ + \tau_{pq}^+ (\widehat{q}_h - q_h^+)\,n^+ &\mbox{ at } x_{i-1}^+,\\
\label{eq:fluxr+}
\widehat{r}_h^{\;+} & = r_h^+ + \tau_{ru}^+\,( {\widehat{u}_{h}} - u_h^+)\,n^+ + \tau_{rq}^+ (\widehat{q}_h - q_h^+)\,n^+ &\mbox{ at } x_{i-1}^+,\\
\label{eq:fluxs+}
\widehat{s}_h^{\;+} & = s_h^+ + \tau_{su}^+\,( {\widehat{u}_{h}} - u_h^+)\,n^+ + \tau_{sq}^+ (\widehat{q}_h - q_h^+)\,n^+ &\mbox{ at } x_{i-1}^+,\\
\label{eq:fluxr-}
\widehat{r}_h^{\;-} & = r_h^- + \tau_{ru}^-\,( {\widehat{u}_{h}} - u_h^-)\,n^- + \tau_{rq}^- (\widehat{q}_h - q_h^-)\,n^- + \tau_{rp}^- (\widehat{p}_h^{\;-} - p_h^-)\,n^- &\mbox{ at } x_i^-,\\
\label{eq:fluxs-}
\widehat{s}_h^{\;-} & = s_h^- + \tau_{su}^-\,( {\widehat{u}_{h}} - u_h^-)\,n^- + \tau_{sq}^- (\widehat{q}_h - q_h^-)\,n^- + \tau_{sp}^- (\widehat{p}_h^{\;-} - p_h^-)\,n^- &\mbox{ at } x_i^-,\\
\label{eq:fluxF}
\widehat{\calF}_h & = \calF(\widehat{s}_h, \widehat{p}_h, \widehat{u}_h) - \tau_\calF(\widehat{u}_h, u_h) (\widehat{u}_h - u_h) n& \mbox{ at } x_i^-, x_{i-1}^+.
%\end{cases}
\end{alignat}
\end{subequations} 
Obviously, \eqref{eq:fluxp+}--\eqref{eq:fluxs+} are defined at $x_0, \cdots, x_{N-1}$, while \eqref{eq:fluxr-}--\eqref{eq:fluxs-} are defined at $x_1, \cdots, x_N$.
The stabilization functions $\tau_{pu}^+$, $\tau_{pq}^+$, $\tau_{ru}^\pm$, $\tau_{rq}^\pm$, $\tau_{rp}^-$, $\tau_{su}^\pm$ $\tau_{sq}^\pm$, $\tau_{sp}^-$ are defined on $\partial\calT_h$, and are usually piece-wise constant. Due to the nonlinearity of $F(u)$, $\tau_\calF(\cdot, \cdot)$ can be a nonlinear function of $\widehat{u}_h$ and $u_h$, and is taken to be $0$ when $F \equiv 0$. These functions, when satisfing certain conditions to be specified later, ensure that the above problem has a unique solution.

It remains to impose the transmission conditions which allows us to solve for the globally coupled unknowns $\{\widehat{u}_{hi}\}_{i=0}^N$, $\{\widehat{q}_{hi}\}_{i=0}^N$, and $\{\widehat{p}^{\;-}_{hi}\}_{i=1}^N$:
\begin{equation}
\label{eq:transmission}
\jmp{\widehat{p}_h } (x_i) = 0, \,\, \jmp{\widehat{r}_h } (x_i) = 0, \,\, \mbox{ and }\,
\jmp{\widehat{\calF}_h } (x_i) = 0
\quad \mbox{ for all } i =1,\dots,N,
\end{equation}
where$\jmp{\omega}(x_i):=\omega(x_i^-)-\omega(x_i^+)$ for any function $\omega$. Here, to honor the periodic boundary conditions, we define $\jmp{\widehat{\omega}_h } (x_N) = \widehat{\omega}_h(x_N^-) - \widehat{\omega}_h(x_0^+)$ for $\omega \in \{p, r, \calF\}$. 
This completes the definition of the HDG method.

\subsubsection{Time discretization}\label{sec:time_discretization}

For time discretization of the KdV equation, we employ implicit time-marching schemes to \rev{mitigate the severe time-step restriction} due to the fifth-order spatial derivative. One may use  implicit time-marching schemes such as BDF or DIRK methods. Here for simplicity, we apply the following second-order midpoint rule \cite{BonaChenKarakashianXing13,ChenCockburnDongDG16,Dong17} to discretize the time derivative.

Let $0 = t_0 < t_1 < \cdots < t_N = T$ be a partition of the interval $[0, T]$
and $\Delta t_j = t_{j+1} -t_j$. At the initial time $t=0$, we choose the initial approximation 
to be the HDG approximate solution of the stationary fifth-order equation
\begin{equation}\label{eq:stationary_eqn}
\rev{\mathcal{D}(u)} +\gamma u =\tilde{f},
\end{equation}
where $\mathcal{D}(u):= \alpha u_{xxx} + \beta u_{xxxxx} + F(u)_x $, $\gamma=1$ and $\tilde{f}= \mathcal{D}(u_0) +\gamma u_0$.
At later time $t=t_{j+1}$ for $j=0, \cdots, N-1$, we let the approximation $u^{j+1}$ to $u(\cdot, t_{j+1})$ be $$u^{j+1}=2 u^{j,1}- u^j,$$ where $u^{j,1}$ is the solution of the equation
$$\frac{u^{j,1}-u^j}{\frac{1}{2}\Delta t_j}+\mathcal{D}(u^{j,1})=f.$$

It is easy to see that the equation above for $u^{j,1}$ can be rewritten into the form of \eqref{eq:stationary_eqn} with $\gamma=2/\Delta t_j$ and $\tilde{f}=f+\gamma u^j$. Therefore, at each time step, we use the HDG method to solve the stationary fifth-order equation \eqref{eq:stationary_eqn}.
To do that, we rewrite \eqref{eq:stationary_eqn} into the following first-order system:
\begin{equation}
\label{eq:equationst}
q - u_x \,=\, 0, \quad p - q_x\,=\, 0,\quad   r - p_x \,=\, 0, \quad s - r_x \,=\, 0, \quad \gamma u+ \calF(s,p,u)_x \,=\,\tilde{f}.
\end{equation}
In order to write the HDG formulation in a compact form, we use the notation $$(\varphi, v):=\sum_{i=1}^N (\phi, v)_{I_i},  \quad \langle \varphi, v n\rangle:=\sum_{i=1}^N \langle \varphi, v n\rangle_{\partial I_i}.$$
The HDG approximations $(u_h, q_h, p_h, r_h, s_h, \widehat{u}_h, \widehat{q}_h, \widehat{p}_h^{\;-})\in \left[W_h^{k}\right]^5\times \left[ L^2(\Eh) \right]^3$ for \eqref{eq:equationst}
satisfy 
\begin{equation}\label{eq:schemest}
\begin{split}
%\label{eq:schemest1}
({q}_h,{v}) + (u_h,v_x) - \langle \widehat{u}_h,{v}n \rangle & = 0, \\
%\label{eq:schemest2}
({p}_h,{z}) + (q_h,z_x) - \langle \widehat{q}_h,{z}n \rangle & = 0,\\
%\label{eq:schemest3}
({r}_h,{w})  + (p_h,w_x  - \langle \widehat{p}_h,{w}n \rangle  & = 0, \\
%\label{eq:schemest4}
({s}_h,{\phi})  + (r_h,\phi_x)  - \langle \widehat{r}_h,{\phi}n \rangle  & = 0,\\
%\label{eq:schemest5}
(\gamma u_h,{\psi})  - (\calF(s_h,p_h,u_h), \psi_x)  + \langle \widehat{\calF}_h, {\psi}n \rangle  & = (f, \psi),
\end{split}
\end{equation}
%\end{alignat}
%\end{subequations}
for all $(v, z, w, \phi, \psi)\,\in\,\left[W_h^{k}\right]^5$. Here the numerical traces $(\widehat{p}_h^{\;+}, \widehat{r}_h, \widehat{s}_h, \widehat{\calF}_h)$ are defined in the same way as \eqref{eq:fluxes}. The globally coupled degrees of freedom are those associated with  $\{\widehat{u}_{hi}\}_{i=0}^N$, $\{\widehat{q}_{hi}\}_{i=0}^N$, and $\{\widehat{p}^{\;-}_{hi}\}_{i=1}^N$ and they are determined by the transmission conditions \eqref{eq:transmission}. Note that the initial approximations at $t=0$ are obtained by using \eqref{eq:schemest}, so they satisfy the equations \eqref{eq:scheme1}--\eqref{eq:scheme4} for the time-dependent problem.

\subsection{Stability of the semi-discrete scheme}
\begin{theorem}
\label{thm:stability}
The semi-discrete scheme \eqref{eq:scheme} for the fifth-order KdV equation is $L^2$-stable if the stabilization functions satisfy the following conditions 
\[
\begin{cases}
\tau_\calF \ge \tilde{\tau}:= \frac{1}{(\widehat{u}_h - u_h)^2} \int_{\widehat{u}_h}^{u_h} \left( F(\widehat{u}_h) - F(s) \right) \cdot n ds,\\
\tau_{su}^+ \ge -\frac{\alpha}{\beta} \tau_{pu}^+ + \frac{1}{2} {\tau_{pu}^+}^2 \quad \mbox{ or } \quad \tau_{rq}^+ \le \frac{\alpha}{2 \beta}  - \frac{1}{2} {\tau_{pq}^+}^2,\\
\left(\tau_{su}^+ + \frac{\alpha}{\beta} \tau_{pu}^+ - \frac{1}{2} {\tau_{pu}^+}^2\right) \left(-\tau_{rq}^+ + \frac{\alpha}{2 \beta}  - \frac{1}{2} {\tau_{pq}^+}^2\right) \ge \frac{1}{4} \left( \tau_{sq}^+ - \tau_{ru}^+ + \frac{\alpha}{\beta} \tau_{pq}^+ - \tau_{pu}^+\tau_{pq}^+ \right)^2,\\
\tau_{su}^- \ge \frac{1}{2} (\tau_{sp}^- + \frac{\alpha}{\beta})^2,\\
\tau_{rq}^- \le -\frac{\alpha}{2 \beta} - \frac{1}{2} {\tau_{rp}^-}^2,\\
-\tau_{su}^- \left( \tau_{rq}^- + \frac{\alpha}{2 \beta} \right) \ge \frac{1}{4}(\tau_{sq}^- - \tau_{ru}^-)^2.
\end{cases}
\]
\end{theorem}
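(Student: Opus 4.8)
The plan is to establish a discrete energy identity and then read the six hypotheses as the positive‑semidefiniteness of certain quadratic forms that appear in it, following the pattern of the HDG analysis of lower‑order KdV equations in \cite{Dong17}. First I would test \eqref{eq:scheme5} with $\psi=u_h$, which produces $\tfrac12\tfrac{d}{dt}\|u_h\|_{L^2(\Omega)}^2$, the volume term $-(\calF(s_h,p_h,u_h),u_{hx})$, the interface term $\langle\widehat{\calF}_h,u_h n\rangle$, and $(f,u_h)$ on the right. The $F$‑part of $-(\calF(s_h,p_h,u_h),u_{hx})$ is, element‑wise, a total $x$‑derivative, hence reduces to a jump of an antiderivative $\Phi$ of $F$; its $\alpha$‑ and $\beta$‑parts I would cancel by testing \eqref{eq:scheme1}--\eqref{eq:scheme4} with a carefully chosen set of linear combinations of $q_h,p_h,r_h,s_h$, the combinations being dictated by the discrete analogue of the continuous chain of integrations by parts that reduces $\int_\Omega u\,(\alpha u_{xxx}+\beta u_{xxxxx})\,dx$ to boundary terms (these vanish under the periodic boundary conditions). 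Using repeatedly the element‑wise identity $(\varphi,\chi_x)_{I_i}=-(\varphi_x,\chi)_{I_i}+\langle\varphi,\chi n\rangle_{\partial I_i}$, every volume integral then cancels except $(u_{ht},u_h)$, and we arrive at
\[
\tfrac12\,\tfrac{d}{dt}\,\|u_h\|_{L^2(\Omega)}^2 \;+\; \Theta \;=\; (f,u_h),
\]
where $\Theta$ is a sum over the nodes $x_i\in\Eh$ of purely interfacial contributions.

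The second step is to evaluate $\Theta$. Substituting the flux definitions \eqref{eq:fluxp+}--\eqref{eq:fluxF} and using the transmission conditions \eqref{eq:transmission} (which make $\widehat p_h$, $\widehat r_h$ and $\widehat{\calF}_h$ single‑valued at each node and impose periodicity at $x_0=x_N$) eliminates $\widehat r_h,\widehat s_h,\widehat p_h^{\,+}$ and splits $\Theta$, at each node $x_i$, into a ``$+$'' part (coming from \eqref{eq:fluxp+}--\eqref{eq:fluxs+}) and a ``$-$'' part (from \eqref{eq:fluxr-}--\eqref{eq:fluxs-}), plus the nonlinear contribution, which at each endpoint reduces to $\bigl(\tau_{\calF}-\tilde\tau\bigr)(\widehat u_h-u_h)^2$ --- this is exactly the defining property of $\tilde\tau$, namely that $\tilde\tau\,(\widehat u_h-u_h)^2$ equals the jump of $\Phi$ measured against the trace $\widehat u_h$ --- so the first hypothesis makes it nonnegative. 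The linear ``$+$'' part at $x_i$ is a quadratic form in the jumps $\delta u^+:=\widehat u_{h}-u_h^+$ and $\delta q^+:=\widehat q_{h}-q_h^+$ only (the ``$+$'' analogue of the $p$‑jump being slaved to these via \eqref{eq:fluxp+}), with matrix, after dividing by $-\beta>0$,
\[
M^+=\begin{pmatrix}
\tau_{su}^+ + \tfrac{\alpha}{\beta}\tau_{pu}^+ - \tfrac12{\tau_{pu}^+}^2
& \tfrac12\bigl(\tau_{sq}^+ - \tau_{ru}^+ + \tfrac{\alpha}{\beta}\tau_{pq}^+ - \tau_{pu}^+\tau_{pq}^+\bigr)\\[5pt]
* & -\tau_{rq}^+ + \tfrac{\alpha}{2\beta} - \tfrac12{\tau_{pq}^+}^2
\end{pmatrix},
\]
so that ``$M^+\succeq0$'' is precisely hypotheses two and three (nonnegativity of the diagonal entries together with $\det M^+\ge0$; hypothesis two is a disjunction because, once the determinant bound of hypothesis three holds, one nonnegative diagonal entry controls the other). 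The linear ``$-$'' part at $x_i$ is a quadratic form in three jumps $\delta u^-,\delta q^-$ and $\delta p^-:=\widehat p_h^{\,-}-p_h^-$ (now $\delta p^-$ is an independent unknown); completing the square in $\delta p^-$ --- the step that brings in $\tau_{rp}^-$ and $\tau_{sp}^-$ through \eqref{eq:fluxr-}--\eqref{eq:fluxs-} --- reduces it to a $2\times2$ form in $(\delta u^-,\delta q^-)$, and hypotheses four, five and six are exactly the nonnegativity of the two diagonal entries and of the determinant of that reduced form. Hence $\Theta\ge0$.

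Finally, from $\tfrac12\tfrac{d}{dt}\|u_h\|_{L^2(\Omega)}^2+\Theta=(f,u_h)$ with $\Theta\ge0$ and $(f,u_h)\le\tfrac12\|f\|_{L^2(\Omega)}^2+\tfrac12\|u_h\|_{L^2(\Omega)}^2$ one gets $\tfrac{d}{dt}\|u_h\|_{L^2(\Omega)}^2\le\|f\|_{L^2(\Omega)}^2+\|u_h\|_{L^2(\Omega)}^2$, and Gronwall's inequality yields $\|u_h(t)\|_{L^2(\Omega)}^2\le e^{t}\|u_h(0)\|_{L^2(\Omega)}^2+(e^{t}-1)\|f\|_{L^2(\Omega)}^2$, i.e.\ the asserted $L^2$‑stability (in particular $\|u_h\|_{L^2(\Omega)}$ is non‑increasing when $f\equiv0$). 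The crux --- and the step I expect to be most laborious --- is the first one together with the first half of the second: finding the exact set of test functions that makes \emph{every} volume term cancel (threading the discrete integration by parts through the five coupled equations), and then the bookkeeping that, after invoking all three transmission conditions and all seven flux definitions, collapses the remaining interface terms into the clean quadratic forms above. Once that identity is established, the positivity check is elementary linear algebra and the conclusion is immediate.
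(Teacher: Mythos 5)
Your proposal follows the paper's proof essentially verbatim in strategy: the same energy identity obtained by testing \eqref{eq:scheme1}--\eqref{eq:scheme5} with the combination $\psi=u_h$, $v=-(\alpha p_h+\beta s_h)$, $z=\alpha q_h+\beta r_h$, $w=-\beta p_h$, $\phi=\beta q_h$; the same reduction of the nonlinear flux contribution to $\langle\tau_\calF-\tilde\tau,(\widehat u_h-u_h)^2\rangle$ via the antiderivative of $F$; and the same reading of the ``$+$'' conditions as positive semidefiniteness of a $2\times2$ form in $(\widehat u_h-u_h^+,\widehat q_h-q_h^+)$ --- your matrix $M^+$ is exactly the paper's $(a_{uu},a_{uq},a_{qq})$ divided by $-\beta$.

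One point in your treatment of the ``$-$'' block does not match the theorem as stated. You propose to eliminate $\widehat p_h^{\,-}-p_h^-$ by a Schur complement and assert that hypotheses four, five and six are ``exactly'' the diagonal and determinant conditions of the reduced $2\times2$ form. Hypotheses four and five do coincide with the nonnegativity of the reduced diagonal, but the determinant of the Schur complement is $\bigl(b_{uu}-\tfrac{b_{up}^2}{4b_{pp}}\bigr)\bigl(b_{qq}-\tfrac{b_{qp}^2}{4b_{pp}}\bigr)-\tfrac14\bigl(b_{uq}-\tfrac{b_{up}b_{qp}}{2b_{pp}}\bigr)^2$, whereas the stated sixth hypothesis is the uncorrected pairwise bound $4b_{uu}b_{qq}\ge b_{uq}^2$ (it contains no $\tau_{sp}^-,\tau_{rp}^-$ terms); the two agree only when $b_{up}=b_{qp}=0$. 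The paper in fact does not complete the square at all: it imposes the three pairwise $2\times2$ minor conditions $4b_{uu}b_{qq}\ge b_{uq}^2$, $4b_{uu}b_{pp}\ge b_{up}^2$, $4b_{pp}b_{qq}\ge b_{qp}^2$ directly. If you carry out your (cleaner) Schur-complement route you will land on a sufficient condition genuinely different from the listed one, so you either need to keep the paper's pairwise splitting of $\Phi_1^-$ or reconcile the determinant conditions; for the concrete choices used in the paper's remark and numerics ($\tau_{sp}^-=\tau_{rp}^-=\tau_{sq}^-=\tau_{ru}^-=0$) the discrepancy disappears because only the $(u,p)$ cross term survives.
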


\begin{remark}
\rev{As complicated as these conditions  in Theorem \ref{thm:stability} may appear, it is nevertheless easy to identify} stabilization functions to satisfy them. For example, we can take
%\begin{subequations}
\begin{equation}
\label{eq:tau_cond}
\begin{split}
%\begin{alignat*}{1}
&\tau_{pu}^+ = \tau_{pq}^+ =   \tau_{sq}^+ = \tau_{ru}^+=0, \quad \tau_{su}^+ \ge 0, \quad \tau_{rq}^+ \le \frac{\alpha}{2 \beta} ,\\
&\tau_{sp}^- = \tau_{rp}^- =    \tau_{sq}^- = \tau_{ru}^-=0,  \quad  \tau_{su}^- \ge\frac{1}{2}( \frac{\alpha}{\beta})^2,\quad \tau_{rq}^- \le -\frac{\alpha}{2 \beta} 
\end{split}
\end{equation}
%\end{subequations}
given that $\beta<0$. Then the numerical traces in \eqref{eq:fluxp+}-\eqref{eq:fluxs-} have the simple form
\begin{alignat*}{2}
\widehat{p}_h^{\;+} & = p_h^+  \qquad &\mbox{ at } x_{i-1}^+,\\
\widehat{r}_h & = r_h +  \tau_{rq} (\widehat{q}_h - q_h )\,n\qquad &\mbox{ at } x_{i-1}^+ \mbox{  and } x_{i}^-,\\
\widehat{s}_h & = s_h + \tau_{su}\,( {\widehat{u}_{h}} - u_h)\,n \qquad  &\mbox{ at } x_{i-1}^+ \mbox{  and } x_{i}^-.
\end{alignat*}
To have $\tau_\calF \ge \tilde{\tau}$, we take $\tau_\calF=0$ if  $F(u)\equiv 0$. If $F$ is nonzero, we can take
$$\tau_\calF\ge \frac{1}{2}\sup_{s\in J(u_h, \widehat{u}_h)}|F'(s)|,$$
where $J(u_h, \widehat{u}_h)=[\min\{u_h,\widehat{u}_h\}, \max\{u_h,\widehat{u}_h\}]$, because
$$\tilde{\tau}=\frac{1}{(u_h-\widehat{u}_h)^2}{\int_{\widehat{u}_h}^{u_h}F'(\xi)(s-\widehat{u}_h) n\, ds}\le \frac{1}{2} \sup_{s\in J(u_h, \widehat{u}_h)}|F'(s)|.$$
There are also other choices of $\tau_\calF$ that satisfy the condition  $\tau_\calF\ge \tilde{\tau}$; see \cite{NguyenPeraireCockburn09}.
\end{remark}

Now let us prove the stability result in Theorem \ref{thm:stability}, and the proof is similar to that in \cite{Dong17, NguyenPeraireCockburn09}.
\begin{proof}
Taking $\psi = u_h$, $v = -(\alpha p_h + \beta s_h)$, $\phi = \beta q_h$, $z = \alpha q_h + \beta r_h$, and $w = -\beta p_h$ in \eqref{eq:scheme1} -- \eqref{eq:scheme5}, adding the five resulting equations together, and performing trivial algebraic manipulations, we have
\begin{alignat*}{1}
(f, u_h) = & (u_{ht}, u_h) - (F(u_h), u_{hx}) + \langle\widehat{\calF}_h, u_h \cdot n \rangle\\
& -(\alpha p_h + \beta s_h, u_{hx}) - (u_h, (\alpha p_h + \beta s_h)_x) + \langle \widehat{u}_h, (\alpha p_h + \beta s_h) n\rangle\\
& + (r_h, \beta q_{hx}) - \langle \widehat{r}_h, \beta q_h \cdot n \rangle - (p_h, \beta p_{hx}) + \langle \widehat{p}_h, \beta p_h \cdot n\rangle\\
& + (q_h, (\alpha q_h + \beta r_h)_x) - \langle \widehat{q}_h, (\alpha q_h + \beta r_h) \cdot n\rangle.
\end{alignat*}
Using integration by parts, the fact that $\widehat{u}_h$, $\widehat{q}_h$, $\widehat{p}_h$, $\widehat{r}_h$, $\widehat{\calF}_h$ are single-valued, and the periodic boundary condition, 
we obtain 
\begin{alignat*}{1}
%\label{eq:staequality1}
(f, u_h)  =& \frac{1}{2}\frac{d}{dt} \int_{\mathcal{T}_h} u_h^2 dx + \Phi_1 + \Phi_2 + \Phi_3,\\
\intertext{ where }
\Phi_1 =  & -\langle \alpha(\widehat{p}_h - p_h) + \beta (\widehat{s}_h - s_h), (\widehat{u}_h - u_h) \cdot n \rangle + \langle \widehat{r}_h - r_h, \beta (\widehat{q}_h - q_h) \cdot n \rangle\\
& + \frac{\alpha}{2} \langle (\widehat{q}_h - q_h)^2, n \rangle - \frac{\beta}{2} \langle (\widehat{p}_h - p_h)^2, n \rangle, \\
%- \frac{\beta}{2} \widehat{p}_h^2(0+)\nonumber\\
\Phi_2  = & -(F(u_h), u_{hx}) - \langle F(\widehat{u}_h) - \tau_\calF (\widehat{u}_h - u_h) \cdot n, (\widehat{u}_h - u_h) \cdot n \rangle,\\
\Phi_3  = & \frac{\beta}{2} \langle \widehat{p}_h^{\,2}, n \rangle.
\end{alignat*}
%\end{subequations}
We immediately realize that $\Phi_3 = 0$ due to the periodicity. Next, we plug in the definition of the numerical fluxes \eqref{eq:fluxes} to simplify $\Phi_1$ and $\Phi_2$. Indeed, $\Phi_1$ can be rewritten as $\Phi_1 = \Phi_1^+ + \Phi_1^- $, where
\begin{alignat*}{1}
\Phi_1^+  =& \langle a_{uu}, (\widehat{u}_h - u_h)^2 \rangle_{\Eh^+} + \langle a_{qq}, (\widehat{q}_h - q_h)^2\rangle_{\Eh^+} + \langle a_{uq}, (\widehat{u}_h - u_h)(\widehat{q}_h - q_h)\rangle_{\Eh^+}\\
\intertext{ and }
\Phi_1^-  =& \langle b_{uu}, (\widehat{u}_h - u_h)^2 \rangle_{\Eh^-} + \langle b_{qq}, (\widehat{q}_h - q_h)^2\rangle_{\Eh^-} + \langle b_{pp}, (\widehat{p}_h - p_h)^2\rangle_{\Eh^-}\\
&+ \langle b_{uq}, (\widehat{u}_h - u_h)(\widehat{q}_h - q_h)\rangle_{\Eh^-} + \langle b_{up}, (\widehat{u}_h - u_h)(\widehat{p}_h - p_h)\rangle_{\Eh^-}\\
& + \langle b_{qp}, (\widehat{p}_h - p_h)(\widehat{q}_h - q_h)\rangle_{\Eh^-}.
\end{alignat*}
Here
\begin{alignat*}{1}
a_{uu} = & -\beta \tau_{su}^+ - \alpha \tau_{pu}^+ + \frac{\beta}{2} {\tau_{pu}^+}^2,\qquad
a_{qq} =   \beta \tau_{rq}^+ - \frac{\alpha}{2} + \frac{\beta}{2}{\tau_{pq}^+}^2,\\
a_{uq} = & -\beta \tau_{sq}^+ + \beta \tau_{ru}^+ - \alpha \tau_{pq}^+ + \beta \tau_{pu}^+ \tau_{pq}^+,\\
b_{uu} = & -\beta \tau_{su}^-,\qquad
b_{qq} =   \beta \tau_{rq}^- + \frac{\alpha}{2}, \qquad
b_{pp} =  -\frac{\beta}{2},\\
b_{uq} = & -\beta \tau_{sq}^- + \beta \tau_{ru}^-,\qquad
b_{up} =  -\beta \tau_{sp}^- - \alpha, \qquad
b_{qp} =  \beta \tau_{rp}^-.
\end{alignat*}

To simplify $\Phi_2$, we introduce $G(s)$, the antiderivative of $f(s)$. We have
\begin{alignat*}{1}
\Phi_2 & = -\langle G(u_h), n \rangle + \langle F(\widehat{u}_h), (\widehat{u}_h - u_h) \cdot n \rangle + \langle \tau_\calF, (\widehat{u}_h - u_h)^2\rangle\\
& = -\langle \int_{\widehat{u}_h}^{u_h} F(s) ds, n\rangle + \langle \int_{\widehat{u}_h}^{u_h} F(\widehat{u}_h)ds, n\rangle + \langle \tau_\calF, (\widehat{u}_h - u_h)^2\rangle\\
& = \langle \tau_\calF - \tilde{\tau}, (\widehat{u}_h - u_h)^2 \rangle,
 \intertext{where } \tilde{\tau} &= \frac{1}{(\widehat{u}_h - u_h)^2} \int_{\widehat{u}_h}^{u_h} \left( F(\widehat{u}_h) - F(s) \right) \cdot n ds.
\end{alignat*}
A sufficient condition for the $L^2$-stability is then $\Phi_2 \ge 0$, $\Phi_1^+ \ge 0$, and $\Phi_1^- \ge 0$. That is to ask
\[
\begin{cases}
& \tau_\calF \ge \tilde{\tau},\\
& a_{uu} \ge 0, \quad a_{qq} \ge 0, \quad 4 a_{uu} a_{qq} \ge a_{uq}^2,\\
& b_{uu} \ge 0, \quad b_{qq} \ge 0, \quad b_{pp} \ge 0, \quad 4 b_{uu} b_{qq} \ge b_{uq}^2,  \quad 4 b_{uu} b_{pp} \ge b_{up}^2, \quad 4 b_{pp} a_{qq} \ge b_{pq}^2.
\end{cases}
\]
Plugging in the definitions of $a_{*}$'s and $b_*$'s above, we have
\[
\begin{cases}
&-\beta \tau_{su}^+ - \alpha \tau_{pu}^+ + \frac{\beta}{2} {\tau_{pu}^+}^2 \ge 0,\\
&\beta \tau_{rq}^+ - \frac{\alpha}{2} + \frac{\beta}{2} {\tau_{pq}^+}^2 \ge 0,\\
&(-\beta \tau_{su}^+ - \alpha \tau_{pu}^+ + \frac{\beta}{2} {\tau_{pu}^+}^2)(\beta \tau_{rq}^+ - \frac{\alpha}{2} + \frac{\beta}{2} {\tau_{pq}^+}^2) \ge \frac{1}{4} \left( -\beta \tau_{sq}^+ + \beta \tau_{ru}^+ - \alpha \tau_{pq}^+ + \beta \tau_{pu}^+ \tau_{pq}^+\right)^2,
\end{cases}
\]
and
\[
\begin{cases}
&\tau_{su}^- \ge 0, \quad \tau_{rq}^- \le -\frac{\alpha}{2 \beta},\\
& -4 \beta \tau_{su}^- (\beta \tau_{rq}^- + \frac{\alpha}{2}) \ge \left(\beta \tau_{sq}^- + \beta \tau_{ru}^- \right)^2,\\
& 2 \beta^2 \tau_{su}^- \ge \left( \alpha + \beta \tau_{sp}^- \right)^2,\\
& -2 \beta (\frac{\alpha}{2} + \beta \tau_{rq}^-) \ge \beta^2 {\tau_{rp}^-}^2.
\end{cases}
\]
It is easy to see that these conditions are equivalent to those listed in the theorem. We thus complete the proof.
\end{proof}

\begin{remark}
With little to no change to the proof, we can show that the scheme is also $L^2$-stable for the Dirichlet-type boundary condition such as the following
\begin{equation}
\label{eq:BC_D}
u|_{0,L} = 0, \quad q|_{0,L} = 0, \quad p|_L = 0. %BC2
\end{equation}
\end{remark}

\section{A Priori Error Analysis}

\label{sec:erroranalysis}

\rev{The goal of this section is to establish the optimal} accuracy of our scheme for the linear case, that is, \rev{when} we assume $F \equiv 0$. Toward that end, we define the errors of the approximations $\omega_h$ and of the numerical fluxes $\widehat{\omega}_h$ as
\[
e_\omega = \omega - \omega_h, \quad \widehat{e}_\omega = \omega - \widehat{\omega}_h \quad \mbox{ for } \omega \in \{u, q, p, r, s\}.
\]
The analysis is projection-based, see also \cite{CockburnGopalakrishnanSayas10, ChenCockburnDongHDG16, Dong17}. We first define a projection
\[
\varPi: \left[ L^2(\calT_h) \right]^5 \longrightarrow \left[ W_h^k \right]^5
\]
that is inspired by the scheme, in particular the definition of the numerical fluxes \eqref{eq:fluxes}. Then we prove the optimal approximation property of this projection error
\[
\delta_\omega = \omega - \varPi \omega \quad \mbox{ for } \omega \in \{u, q, p, r, s\},
\]
and that the projections of the error, defined as
\[
\varepsilon_\omega = \varPi \omega - \omega_h \quad \mbox{ for } \omega \in \{u, q, p, r, s\},
\]
also converge optimally. We can then conclude that the errors $e_\omega$ converge optimally due to that $e_\omega = \delta_\omega + \varepsilon_\omega$ and the triangle inequality.

Let us first define the projection $\varPi: \left[ L^2(\calT_h) \right]^5 \longrightarrow \left[ W_h^k \right]^5$. 
For $(u, q, p, r, s) \in \left[ L^2(\calT_h) \right]^5$, we find in $\left[ W_h^k \right]^5$ the element $(\varPi u, \varPi q, \varPi p, \varPi r, \varPi s)$ whose restriction on element $I_i=(x_{i-1}, x_i)$ satisfies the following condition
\begin{equation} \label{eq:proj}
\begin{cases}
(\delta u, v)_{I_i} = 0, \;\; (\delta q, z)_{I_i} = 0, \;\; (\delta p, w)_{I_i} = 0, \;\; & (\delta r, \phi)_{I_i} = 0, \;\; (\delta s, \psi)_{I_i} = 0,\\
\delta_p - \tau_{pu}^+ \delta_u n - \tau_{pq}^+ \delta_q n = 0 & \mbox{ at } x_{i-1}^+\\
\delta_r - \tau_{ru}^+ \delta_u n - \tau_{rq}^+ \delta_q n = 0 & \mbox{ at } x_{i-1}^+\\
\delta_s - \tau_{su}^+ \delta_u n - \tau_{sq}^+ \delta_q n = 0 & \mbox{ at } x_{i-1}^+\\
\delta_r - \tau_{ru}^- \delta_u n - \tau_{rq}^- \delta_q n - \tau_{rp}^- \delta_p n= 0 & \mbox{ at } x_{i}^-\\
\delta_s - \tau_{su}^- \delta_u n - \tau_{sq}^- \delta_q n - \tau_{sp}^- \delta_p n= 0 & \mbox{ at } x_{i}^-
\end{cases}
\end{equation}
for any $(v, z, w, \phi, \psi) \in \left[P_{k-1}(K)\right]^5,$ where we have used the notation $\delta_\omega:=\omega-\varPi\omega$ for any $\omega\in L^2(\calT_h)$.

To state our error estimate, we use the following notation. The $H^s(D)$-norm is denoted by $\|\cdot\|_{s, D}$.
We drop the first subindex if $s=0$,  and the second one if $D=\Omega$ or $D={\mathcal T}_h$.
We are now ready to state our error estimate which is given as an upper bound for the following quantity
\begin{equation}
\label{eq:error_def}
\lVert \varepsilon \rVert^2:= \lVert \varepsilon_u \rVert^2 + \lVert \varepsilon_q \rVert^2 + \lVert \varepsilon_p \rVert^2 + \lVert \varepsilon_r \rVert^2 + \lVert \varepsilon_s \rVert^2 + \lVert \varepsilon_{u_t} \rVert^2.
\end{equation}

\begin{theorem}
\label{thm:error}
Suppose that $u$ is the solution to the fifth-order Korteweg-de Vries equation \eqref{eq:prob} with $F \equiv 0$, $\alpha = 0$ and $\beta=-1$, and $u_h$ is the approximate solution given by the HDG scheme \eqref{eq:scheme}--\eqref{eq:transmission} with the stabilization functions $\tau_{su}^\pm=1, \tau_{rq}^\pm=-1$ and all others being zero. 
Then for $k > 0$, the projection of the error \eqref{eq:error_def} satisfies
\[
\lVert \varepsilon(t) \rVert  \le C h^{k + 1}.
\]
\end{theorem}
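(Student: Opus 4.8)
The plan is to follow the projection-based energy argument familiar from HDG analyses of dispersive equations (as in \cite{Dong17}), adapted to the fifth-order setting. First I would derive the error equations: since the exact solution $(u,q,p,r,s)$ satisfies the first-order system \eqref{eq:system} and the numerical traces in \eqref{eq:fluxes} are consistent, subtracting \eqref{eq:scheme} (with $F\equiv 0$, $\alpha=0$, $\beta=-1$) from the exact system gives equations for $(e_u,e_q,e_p,e_r,e_s)$. Splitting $e_\omega=\delta_\omega+\varepsilon_\omega$ and using the orthogonality relations \eqref{eq:proj} that define the projection $\varPi$, most of the volume terms involving $\delta_\omega$ drop out (they pair against polynomials of degree $k-1$, which is where the test functions $q_{hx}$, etc.\ live after integration by parts), and the boundary terms involving $\delta_\omega$ cancel against the stabilization terms precisely because of the trace conditions in \eqref{eq:proj}. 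What remains is a clean system for $\varepsilon_\omega$ with right-hand side driven only by $\delta_{u_t}=(u-\varPi u)_t$.

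Next I would run the energy identity: mimic the stability proof of Theorem~\ref{thm:stability}, choosing the test functions $\psi=\varepsilon_u$, $v=-\beta\varepsilon_s=\varepsilon_s$, $\phi=\beta\varepsilon_q=-\varepsilon_q$, $z=\beta\varepsilon_r=-\varepsilon_r$, $w=-\beta\varepsilon_p=\varepsilon_p$ (specialized to $\alpha=0$, $\beta=-1$), add the resulting error equations, and integrate by parts. This produces $\tfrac12\frac{d}{dt}\lVert\varepsilon_u\rVert^2$ plus boundary quadratic forms of exactly the type $\Phi_1^\pm$, which are nonnegative under the chosen stabilization $\tau_{su}^\pm=1$, $\tau_{rq}^\pm=-1$, plus a right-hand side that is a sum of $L^2$ pairings of $\delta_{u_t}$ (and possibly $\delta_u$) against the $\varepsilon$'s and against traces. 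The boundary terms coming from $\delta$'s need care: one has to show they are controlled, using the trace-orthogonality in \eqref{eq:proj} together with inverse and trace inequalities, by $Ch^{2(k+1)}$ plus a small multiple of the nonnegative boundary forms. Integrating in time and invoking that $\varepsilon(0)$ is itself $O(h^{k+1})$ (because the initial data is computed by the same HDG solver for the stationary equation, whose analysis — cited as the $k+2$ superconvergence-to-projection result in the introduction — certainly yields $O(h^{k+1})$) gives $\lVert\varepsilon_u(t)\rVert\le Ch^{k+1}$.

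The remaining issue is that \eqref{eq:error_def} also controls $\varepsilon_q,\varepsilon_p,\varepsilon_r,\varepsilon_s$ and $\varepsilon_{u_t}$, not just $\varepsilon_u$, so the energy bound on $\varepsilon_u$ alone is not enough. Here I would use the auxiliary equations in the $\varepsilon$-system as a ``lifting'': the first error equation expresses $\varepsilon_q$ in terms of $\varepsilon_u$ and boundary data $\widehat{\varepsilon}_u$, and cascading through the second, third, fourth equations recovers $\varepsilon_p,\varepsilon_r,\varepsilon_s$. One estimates $\widehat{\varepsilon}_u$ (and the other numerical-trace errors) in terms of the volume quantities via the stabilization relations in \eqref{eq:fluxes} and a discrete trace inequality; combined with the already-established $O(h^{k+1})$ bound on the boundary forms from the energy identity (whose time integral is bounded), this yields $O(h^{k+1})$ for each of $\varepsilon_q,\dots,\varepsilon_s$. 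Finally $\varepsilon_{u_t}$ is handled by differentiating the error equations in $t$ and repeating the energy step, or more simply by reading $\varepsilon_{u_t}$ off the fifth error equation $(\varepsilon_{u_t},\psi)=(\text{flux terms in }\varepsilon_s,\varepsilon_p)$, which are already controlled. The main obstacle I anticipate is the bookkeeping in bounding the numerous $\delta$-boundary terms in the energy identity and showing they do not spoil the optimal rate — this is where the precise choice of $\varPi$ in \eqref{eq:proj} and the structure of the stabilization must be exploited in concert, and it is the step most likely to require the postponed appendix computations.
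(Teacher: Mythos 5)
Your setup (error equations, splitting $e_\omega=\delta_\omega+\varepsilon_\omega$, the $\Phi_1^\pm$-type energy identity for $\varepsilon_u$, and the treatment of $\varepsilon(0)$ via the stationary solver) matches the paper, but the step you propose for recovering $\varepsilon_q,\varepsilon_p,\varepsilon_r,\varepsilon_s$ — ``lifting'' or cascading through the auxiliary error equations — does not deliver the optimal rate and is a genuine gap. The first error equation reads $(\varepsilon_q,v)+(\delta_q,v)+(\varepsilon_u,v_x)-\langle\widehat{\varepsilon}_u,vn\rangle=0$; extracting $\varepsilon_q$ from it (take $v=\varepsilon_q$, or integrate by parts first) forces either $\lVert (\varepsilon_q)_x\rVert$ or $\lVert(\varepsilon_u)_x\rVert$ through an inverse inequality, costing a factor $h^{-1}$, plus a trace term costing $h^{-1/2}$. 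So at best $\lVert\varepsilon_q\rVert\lesssim h^{-1}\lVert\varepsilon_u\rVert+\cdots$, and after cascading four times you land at $\lVert\varepsilon_s\rVert\lesssim h^{-4}\lVert\varepsilon_u\rVert\sim h^{k-3}$, far from $h^{k+1}$. The time-integrated nonnegative boundary forms from the single energy identity do not rescue this: they control nodal quantities in a time-averaged sense only. (The paper does use exactly such an inverse-inequality argument once, in Lemma~\ref{lemma_init}, to bound $\varepsilon_{u_t}(0)$ — but only because the stationary analysis gives the superconvergent $\lVert\varepsilon_s(0)\rVert\le Ch^{k+2}$, so one power of $h^{-1}$ is affordable there; no such superconvergence is available at later times.)

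What the paper actually does is avoid the cascade entirely: Lemma~\ref{lemma_energy} establishes \emph{six} coupled energy identities, one producing $\tfrac12\tfrac{d}{dt}\lVert\varepsilon_\omega\rVert^2$ for each of $\varepsilon_u,\varepsilon_q,\varepsilon_p,\varepsilon_r,\varepsilon_s,\varepsilon_{u_t}$, obtained by testing both the error equations \eqref{eq:error} and their time derivatives \eqref{eq:error_t} with carefully matched combinations (e.g.\ the $\varepsilon_q$ identity comes from $v=\varepsilon_q$ in the time-differentiated first equation together with $z=\varepsilon_{u_t}$ in the second). Only the \emph{sum} $T=\sum T_k$ of the boundary terms is shown nonnegative (by completing squares under the specific choice $\tau_{su}^\pm=1$, $\tau_{rq}^\pm=-1$, $\beta=-1$), the volume cross terms such as $(\varepsilon_p,\varepsilon_{u_t})$ are bounded by $\lVert\varepsilon\rVert^2$ and absorbed by Gronwall on the combined norm, and some $\delta$-terms must be integrated by parts in time before estimating. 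No inverse inequalities appear, which is precisely how optimality for all five spatial variables and $\varepsilon_{u_t}$ is preserved. To repair your proof you would need to replace the lifting step by this system of coupled identities (or an equivalent device that bounds all six quantities simultaneously without differentiating in space).
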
 

To prove Theorem \ref{thm:error}, we need the following three lemmas. They are the energy equalities, the approximation property of the projection, and the requirement on the initial condition.

\begin{lemma}[Energy identities]
\label{lemma_energy}
We have that
\begin{subequations}
\label{eq:energyidentities}
\begin{alignat}{1}
\label{eq:energyid1}
0 = & \frac{1}{2}\frac{d}{dt} \lVert \varepsilon_u \rVert^2 + S_1 + T_1,\\
\label{eq:energyid2}
0 = & \frac{1}{2}\frac{d}{dt} \lVert \varepsilon_q \rVert^2 + S_2 + T_2,\\
\label{eq:energyid3}
0 = & \frac{1}{2}\frac{d}{dt} \lVert \varepsilon_p \rVert^2 + S_3 + T_3,\\
\label{eq:energyid4}
0 = & \frac{1}{2}\frac{d}{dt} \lVert \varepsilon_r \rVert^2 + S_4 + T_4,\\
\label{eq:energyid5}
0 = & \frac{1}{2}\frac{d}{dt} \lVert \varepsilon_s \rVert^2 + S_5 + T_5,\\
\label{eq:energyid6}
0 = & \frac{1}{2}\frac{d}{dt} \lVert \varepsilon_{u_t} \rVert^2 + S_6 + T_6.
\end{alignat}
\end{subequations}
Moreover, $$T :={\displaystyle \sum_{k=1}^6} T_k \ge 0.$$
Here
\begin{alignat*}{1}
S_1 = & (\delta_{ut}, \varepsilon_u) - (\delta_q, \alpha \varepsilon_p + \beta \varepsilon_s) + (\delta_p, \alpha \varepsilon_q + \beta \varepsilon_r) - (\delta_r, \beta \varepsilon_p) + (\delta_s, \beta \varepsilon_q)\nonumber\\
T_1 = & -\langle \alpha (\widehat{\varepsilon}_p - \varepsilon_p) + \beta (\widehat{\varepsilon}_s - \varepsilon_s), (\widehat{\varepsilon}_u - \varepsilon_u) \cdot n \rangle + \beta \langle (\widehat{\varepsilon}_r - \varepsilon_r) , (\widehat{\varepsilon}_q - \varepsilon_q) \cdot n \rangle \nonumber\\
& + \frac{\alpha}{2} \langle (\widehat{\varepsilon}_q - \varepsilon_q) ^2, n\rangle - \frac{\beta}{2} \langle (\widehat{\varepsilon}_p - \varepsilon_p)^2, n \rangle \nonumber\\
S_2 = & (\delta_{qt}, \varepsilon_q) + (\delta_p, \varepsilon_{ut}) + (\varepsilon_p, \varepsilon_{ut})\nonumber \\
T_2 = & \langle (\widehat{\varepsilon}_q - \varepsilon_q) , (\widehat{\varepsilon}_{ut} - \varepsilon_{ut})  \cdot n\rangle\nonumber\\
S_3 = & (\delta_{pt}, \varepsilon_{p}) + (\delta_{r}, \varepsilon_{qt}) - (\delta_{qt}, \varepsilon_{r}) - (\delta_{s}, \varepsilon_{ut}) - (\varepsilon_{s}, \varepsilon_{ut})\nonumber\\
T_3 = & \langle (\widehat{\varepsilon}_p - \varepsilon_p) , (\widehat{\varepsilon}_{qt} - \varepsilon_{qt}) \cdot n \rangle - \langle (\widehat{\varepsilon}_r - \varepsilon_r) , (\widehat{\varepsilon}_{ut} - \varepsilon_{ut}) \cdot n \rangle\nonumber\\
S_4 = & (\delta_{rt}, \varepsilon_{r}) + (\delta_{s}, \varepsilon_{pt}) - (\delta_{pt}, \varepsilon_{s} + \frac{\alpha}{\beta}\varepsilon_p) + \frac{1}{\beta}(\delta_{ut}, \varepsilon_{qt}) - \frac{1}{\beta}(\delta_{qt}, \varepsilon_{ut})\nonumber\\
T_4 = & \langle (\widehat{\varepsilon}_r - \varepsilon_r) , (\widehat{\varepsilon}_{pt} - \varepsilon_{pt}) \cdot n \rangle
- \langle (\widehat{\varepsilon}_{qt} - \varepsilon_{qt}) , (\widehat{\varepsilon}_s - \varepsilon_s) \cdot n \rangle\nonumber\\
& - \frac{1}{2 \beta} \langle (\widehat{\varepsilon}_{ut} - \varepsilon_{ut})^2, n\rangle
 - \frac{\alpha}{\beta} \langle (\widehat{\varepsilon}_p - \varepsilon_p) , (\widehat{\varepsilon}_{qt} - \varepsilon_{qt})  \cdot n\rangle \nonumber \\
 S_5 = & (\delta_{st}, \varepsilon_s) - \frac{1}{\beta}(\delta_{ut}, \varepsilon_{rt}) + \frac{1}{\beta}(\delta_{rt}, \varepsilon_{ut}) + \frac{1}{\beta}(\delta_{qt}, \varepsilon_{pt}) - \frac{1}{\beta}(\delta_{pt}, \varepsilon_{qt}) - \frac{\alpha}{\beta}(\delta_{r}, \varepsilon_{rt})\nonumber\\
 T_5 = & \langle (\widehat{\varepsilon}_{rt} - \varepsilon_{rt}) , (\widehat{\varepsilon}_s - \varepsilon_s) \cdot n\rangle
 + \frac{1}{\beta} \langle (\widehat{\varepsilon}_{pt} - \varepsilon_{pt}) , (\widehat{\varepsilon}_{ut} - \varepsilon_{ut}) \cdot n \rangle -\frac{1}{2 \beta} \langle (\widehat{\varepsilon}_{qt} - \varepsilon_{qt})^2, n \rangle \nonumber \\
 S_6 = & (\delta_{u_{tt}}, \varepsilon_{ut}) - (\delta_{qt}, \alpha \varepsilon_{pt} + \beta \varepsilon_{st})
 + (\delta_{pt}, \alpha \varepsilon_{qt} + \beta \varepsilon_{rt}) + (\delta_{st}, \beta \varepsilon_{qt}) - (\delta_{rt}, \beta \varepsilon_{pt})
 \nonumber\\
 T_6 = & -\langle \beta (\widehat{\varepsilon}_{st} - \varepsilon_{st}) + \alpha (\widehat{\varepsilon}_{pt} - \varepsilon_{pt}) , (\widehat{\varepsilon}_{ut} - \varepsilon_{ut}) \cdot n \rangle +\frac{\alpha}{2} \langle (\widehat{\varepsilon}_{qt} - \varepsilon_{qt})^2, n\rangle \nonumber\\
& + \beta \langle (\widehat{\varepsilon}_{rt} - \varepsilon_{rt}) , (\widehat{\varepsilon}_{qt} - \varepsilon_{qt}) \cdot n \rangle  - \frac{\beta}{2} \langle (\widehat{\varepsilon}_{pt} - \varepsilon_{pt})^2, n \rangle. \nonumber
\end{alignat*}
\end{lemma}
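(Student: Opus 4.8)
The plan is to derive each identity \eqref{eq:energyid1}--\eqref{eq:energyid6} from the error equations obtained by subtracting the scheme \eqref{eq:scheme} from the exact-solution version of the same equations, and then to rewrite everything in terms of the projection-of-the-error quantities $\varepsilon_\omega$ using the commuting properties \eqref{eq:proj} of $\varPi$. Since $F\equiv 0$, $\alpha=0$, $\beta=-1$, the system \eqref{eq:system} is linear, so the error equations read exactly like \eqref{eq:scheme1}--\eqref{eq:scheme5} with $u_h\to e_u$, etc., and with test functions in $W_h^k$; splitting $e_\omega=\delta_\omega+\varepsilon_\omega$ and using that $\varPi$ kills the volume terms against $P_{k-1}$ and makes the specified flux combinations of $\delta$'s vanish, the $\delta$-contributions collapse to exactly the $S_j$ listed (volume inner products only) while the $\varepsilon$-contributions produce the time-derivative term $\tfrac12\frac{d}{dt}\|\varepsilon_\omega\|^2$ plus the boundary term $T_j$. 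The first identity \eqref{eq:energyid1} is the analogue of the stability computation: test \eqref{eq:scheme1}--\eqref{eq:scheme5} (in error form) with $\psi=\varepsilon_u$, $v=-(\alpha\varepsilon_p+\beta\varepsilon_s)$, $\phi=\beta\varepsilon_q$, $z=\alpha\varepsilon_q+\beta\varepsilon_r$, $w=-\beta\varepsilon_p$, add, integrate by parts, and use single-valuedness of the numerical traces and periodicity exactly as in the proof of Theorem \ref{thm:stability}.

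The remaining five identities are obtained by differentiating the error equations in $t$ (legitimate because the scheme is linear and the spaces are $t$-independent) and choosing analogous but shifted test functions. Concretely, \eqref{eq:energyid2} comes from testing the $q$-equation and the $u_t$-equation with $\varepsilon_q$ and $\varepsilon_{ut}$; \eqref{eq:energyid3} from the $t$-differentiated $q$- and $p$-equations paired with $\varepsilon_r,\varepsilon_{qt}$ and the $u_t$-equation with $\varepsilon_s$; \eqref{eq:energyid4} and \eqref{eq:energyid5} from further $t$-differentiation, dividing by $\beta$ where the listed $S_j,T_j$ indicate, and pairing the $t$-differentiated $r$-, $p$-, $q$-, $u$-equations appropriately; and \eqref{eq:energyid6} is just \eqref{eq:energyid1} applied to the $t$-differentiated system, i.e. with every $\varepsilon_\omega$ replaced by $\varepsilon_{\omega t}$. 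In each case the bookkeeping is: volume terms not annihilated by the projection become the $S_j$, the $\tfrac12\frac{d}{dt}\|\varepsilon_\cdot\|^2$ term appears from the diagonal $(\varepsilon_{\cdot t},\varepsilon_\cdot)$ pairing, and the boundary leftovers — after inserting the flux definitions \eqref{eq:fluxp+}--\eqref{eq:fluxs-} with the chosen $\tau$'s — are the $T_j$. One should check that the non-integrated boundary terms vanish by periodicity (as $\Phi_3$ did), and that the terms $\pm\frac{1}{\beta}(\delta_{\cdot t},\varepsilon_{\cdot t})$ that appear with opposite signs across different $S_j$ are genuinely there and not cancelling prematurely — they are meant to survive into the final estimate.

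The one genuinely substantive claim is $T=\sum_{k=1}^6 T_k\ge 0$. The strategy is to collect all six boundary sums, group them by node type ($\Eh^+$ versus $\Eh^-$, i.e. the $x_{i-1}^+$ traces where $\widehat p^+$ is defined by \eqref{eq:fluxp+}--\eqref{eq:fluxs+} versus the $x_i^-$ traces governed by \eqref{eq:fluxr-}--\eqref{eq:fluxs-}), substitute the specific stabilization values $\tau_{su}^\pm=1,\ \tau_{rq}^\pm=-1$, all others zero, and verify that at each node the resulting quadratic form in the jump variables $\widehat\varepsilon_u-\varepsilon_u$, $\widehat\varepsilon_q-\varepsilon_q$, $\widehat\varepsilon_p-\varepsilon_p$ and their $t$-derivatives is a sum of perfect squares. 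This is exactly the mechanism behind Theorem \ref{thm:stability} — with $\alpha=0,\beta=-1$ the conditions there reduce to $\tau_{su}^\pm\ge\tfrac12(\tau_{sp}^-)^2$, $\tau_{rq}^\pm\le 0$ (appropriately signed), etc., which the chosen constants satisfy with room to spare — so the same completion-of-squares that made $\Phi_1^\pm\ge 0$ will make $T\ge 0$, now applied simultaneously to the "spatial" jumps and their time derivatives. I expect this to be the main obstacle: not conceptually, but because it requires carefully assembling the boundary terms from all six identities, tracking signs through the division by $\beta$, and confirming that the cross terms assemble into the nonnegative quadratic forms rather than leaving an indefinite remainder. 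Once $T\ge 0$ is established, the theorem follows quickly by summing \eqref{eq:energyid1}--\eqref{eq:energyid6}, bounding $|S:=\sum S_j|$ by $C\,(\|\delta\|_* )\,(\|\varepsilon\|+\|\varepsilon\|)\le Ch^{k+1}\|\varepsilon\|+C h^{2(k+1)}$ via the projection approximation lemma and Young's inequality, invoking Grönwall, and using the initial-condition lemma to control $\|\varepsilon(0)\|$ — details I would defer, as the paper does, to the later sections and appendices.
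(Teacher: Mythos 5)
Your proposal follows essentially the same route as the paper's proof: derive the error equations, split $e_\omega=\delta_\omega+\varepsilon_\omega$ using the projection, differentiate the error equations in time, test with the specific combinations (your choices for \eqref{eq:energyid1} match the paper exactly, and \eqref{eq:energyid6} is indeed the $t$-differentiated version of \eqref{eq:energyid1}), and then establish $T\ge 0$ by grouping the boundary terms at $x_{i-1}^+$ and $x_i^-$ separately and completing squares with $\tau_{su}^\pm=1$, $\tau_{rq}^\pm=-1$, $\beta=-1$. Your descriptions of the test-function pairings for identities \eqref{eq:energyid2}--\eqref{eq:energyid5} are looser than the paper's explicit list (e.g.\ \eqref{eq:energyid3} uses $z=\varepsilon_p$ in the $t$-differentiated second equation, $w=\varepsilon_{qt}$ in the third, $v=-\varepsilon_r$ in the $t$-differentiated first, and $\phi=-\varepsilon_{ut}$ in the fourth), but these can be read off from the stated $S_j$'s and the strategy is the correct one.
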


The proof of this lemma is long and technical, thus given in Appendix \ref{sec:appendixA}.

The next lemma states that the projection $\varPi$ is optimal, that is, the projection error is of order $k+1$. To describe it, we define the auxiliary penalization functions
\[
\theta^\alpha_\omega:= \tau_{\alpha \omega}^- + \tau_{\alpha \omega}^+ - \tau_{\alpha p}^-\tau_{p \omega}^+ \mbox{ for } \alpha \in \{s, r\} \mbox{ and } \omega \in \{u, q\}.
\]

\begin{lemma} [Approximation properties of the projection.]
\label{lemma_approx}
When the penalization functions satisfy $\theta_q^s \theta_u^r - \theta_u^s \theta_q^r \neq 0$, we have that, for any $(u, q, p, r, s) \in \left[ H^{k+1}(K)\right]^5$,
\[
\lVert \delta_u \rVert + \lVert \delta_q \rVert + \lVert \delta_p \rVert + \lVert \delta_r \rVert + \lVert \delta_s \rVert \le C h^{k+1}.
\]
\end{lemma}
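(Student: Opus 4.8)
\textbf{Proof proposal for Lemma \ref{lemma_approx}.}

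The plan is to prove this by a standard scaling/homogeneity argument on the reference element, combined with a local solvability (unisolvence) argument for the projection $\varPi$ on each interval $I_i$. First I would verify that the defining conditions \eqref{eq:proj} actually determine $\varPi\omega$ uniquely on each element. On $I_i$ the five polynomials $(\varPi u,\varPi q,\varPi p,\varPi r,\varPi s)\in[P_k(I_i)]^5$ have $5(k+1)$ degrees of freedom. The $L^2$-orthogonality conditions against $[P_{k-1}(I_i)]^5$ supply $5k$ equations, and the six pointwise conditions at the endpoints $x_{i-1}^+$ and $x_i^-$ supply the remaining $5$ equations (three at $x_{i-1}^+$ involving $\delta_p,\delta_r,\delta_s$, and two at $x_i^-$ involving $\delta_r,\delta_s$ — note $\delta_u,\delta_q$ at both endpoints are the already-determined $L^2$-projection components, so they enter as data). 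Thus I first solve the scalar problems for $\varPi u$ and $\varPi q$: each is just the $L^2$-projection onto $P_k(I_i)$ (the orthogonality condition against $P_{k-1}$ plus no endpoint constraint... actually with $k+1$ unknowns and $k$ orthogonality equations one degree of freedom remains, pinned down — I would recheck: in fact $\delta_u$ has $k+1$ unknowns and only the orthogonality against $P_{k-1}$, which is $k$ conditions, so there is a leftover; the endpoint conditions in \eqref{eq:proj} never constrain $\delta_u$ or $\delta_q$ directly, so I suspect the intended reading is that $\varPi u$ is orthogonal to $P_k$, i.e. the full $L^2$-projection — I would resolve this by reading the appendix, but for the plan I assume $\varPi u,\varPi q$ are the $L^2$-projections onto $P_k$). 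Then, with $\delta_u,\delta_q$ fixed, the three conditions at $x_{i-1}^+$ pin down the boundary values of $\varPi p,\varPi r,\varPi s$, the two conditions at $x_i^-$ pin down $\varPi r,\varPi s$ there, and the $L^2$-orthogonality against $P_{k-1}$ fills in the remaining $k$ interior moments of each; the resulting linear system for $\varPi p$ is triangular once $\varPi u,\varPi q$ are known, and the coupled system for $(\varPi r,\varPi s)$ has coefficient matrix whose determinant is (up to nonzero factors) exactly $\theta_q^s\theta_u^r-\theta_u^s\theta_q^r$, which is why that hypothesis appears. So the first key step is: \emph{show the local projection is well-defined under the hypothesis $\theta_q^s\theta_u^r-\theta_u^s\theta_q^r\neq 0$.}

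The second key step is the approximation estimate itself, via the Bramble–Hilbert lemma on the reference interval $\hat I=(0,1)$. I would map $I_i$ to $\hat I$ by the affine change of variables $x=x_{i-1}+h_i\hat x$. Under this map the $L^2$-orthogonality conditions are scale-invariant, and the pointwise conditions at the endpoints are also preserved (pointwise evaluation doesn't scale). The stabilization functions $\tau_{\alpha\omega}^{\pm}$ are piecewise constant; I need to assume they are uniformly bounded (independent of $h$) and that $|\theta_q^s\theta_u^r-\theta_u^s\theta_q^r|$ is bounded below uniformly — for the specific choice in Theorem \ref{thm:error} ($\tau_{su}^\pm=1$, $\tau_{rq}^\pm=-1$, all others zero) one computes $\theta_u^s=\tau_{su}^-+\tau_{su}^+=2$, $\theta_q^s=0$, $\theta_u^r=0$, $\theta_q^r=\tau_{rq}^-+\tau_{rq}^+=-2$, so $\theta_q^s\theta_u^r-\theta_u^s\theta_q^r = 0\cdot 0 - 2\cdot(-2)=4\neq0$, confirming consistency. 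On $\hat I$, the operator $\hat\varPi$ is a bounded linear projection onto $[P_k(\hat I)]^5$ that reproduces polynomials of degree $\le k$ (each component, by construction: if $\omega\in P_k$ then $\delta_\omega=0$ satisfies all conditions, and uniqueness gives $\varPi\omega=\omega$). Hence $I-\hat\varPi$ annihilates $[P_k(\hat I)]^5$, and the Bramble–Hilbert / Deny–Lions lemma gives $\|\omega-\hat\varPi\omega\|_{0,\hat I}\le C|\omega|_{k+1,\hat I}$ for each component. Scaling back to $I_i$ yields $\|\delta_\omega\|_{0,I_i}\le C h_i^{k+1}|\omega|_{k+1,I_i}$, and summing over $i$ gives the claim with $h=\max_i h_i$.

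\textbf{Main obstacle.} The routine part is the Bramble–Hilbert argument; the delicate part — and the place I expect to spend real effort — is establishing the \emph{uniform} boundedness of $\hat\varPi$ on the reference element, i.e. that the norm of the projection operator does not degenerate. This requires showing the local linear system is not merely invertible but well-conditioned uniformly in $h$ (and in the mesh geometry, under a quasi-uniformity or at least a shape-regularity assumption). Concretely, after eliminating $\varPi u,\varPi q$ and the scalar $\varPi p$, one is left with a $2(k+1)\times 2(k+1)$ system for the moments of $(\varPi r,\varPi s)$ whose Schur complement on the two boundary degrees of freedom is governed by the matrix $\begin{bmatrix}\theta_u^s & \theta_q^s\\ \theta_u^r & \theta_q^r\end{bmatrix}$; I must verify its inverse is bounded, which is exactly the quantitative version of $\theta_q^s\theta_u^r-\theta_u^s\theta_q^r\neq0$. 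I would also need to track how the interior-moment equations couple back to the boundary data through the (bounded) $L^2$-projection operators on $\hat I$, so that the overall operator norm is controlled by a constant depending only on $k$, the $\tau$'s, and the lower bound on $|\theta_q^s\theta_u^r-\theta_u^s\theta_q^r|$. Once that uniform bound is in hand, the rest follows mechanically, and the detailed verification is naturally deferred to the appendix (as the paper indicates for the companion lemmas).
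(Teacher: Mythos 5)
There is a genuine gap, and it lies exactly at the point you flagged as uncertain: your assumed reading of the projection \eqref{eq:proj} is wrong, and the unisolvence structure you build on it does not hold. In the paper's definition, \emph{all five} components satisfy orthogonality only against $P_{k-1}(I_i)$ (that is $5k$ conditions), and the five endpoint conditions close the square $5(k+1)\times 5(k+1)$ system; $\varPi u$ and $\varPi q$ are \emph{not} the full $L^2$-projections onto $P_k$. Under your assumed reading ($\varPi u,\varPi q$ orthogonal to all of $P_k$), the count becomes $2(k+1)+3k+5=5(k+1)+2$ conditions for $5(k+1)$ unknowns, i.e.\ the local system is overdetermined and generically unsolvable, so the projection you analyze is not the one in the paper. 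The paper's actual argument writes $\delta_\omega=g_\omega+d_\omega$ with $g_\omega=\omega-(\omega)_k$ and observes that each $d_\omega=c_\omega L_k$ is a multiple of the degree-$k$ Legendre polynomial; the five endpoint conditions then become a $5\times 5$ linear system $\mathbb{A}\vec c=\vec b$ for the top coefficients $(c_s,c_r,c_p,c_q,c_u)$, whose row reduction produces the $2\times 2$ block $\bigl(\begin{smallmatrix}\theta_q^s & \theta_u^s\\ \theta_q^r & \theta_u^r\end{smallmatrix}\bigr)$ acting on $(c_q,c_u)$. So the determinant condition $\theta_q^s\theta_u^r-\theta_u^s\theta_q^r\neq 0$ governs the corrections to $u$ and $q$, not (as in your Schur-complement picture) the boundary values of $r$ and $s$ after $u,q,p$ have been eliminated. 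Since $\vec b$ is built from the $L^2$-projection errors $g_\omega$, the bound $\lvert c_\omega\rvert\le Ch^{k+1}$ (suitably normalized) follows from Cramer's rule and the known estimate for $g_\omega$, which is the whole proof.

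Your second step (Bramble--Hilbert on the reference element, using polynomial reproduction and uniform boundedness of the locally defined projection) is a legitimate alternative once the correct unisolvence and a uniform bound on $\mathbb{A}^{-1}$ are in hand — and your verification that $\Delta=4$ for the stabilization choice of Theorem \ref{thm:error} is correct. But as written, the proposal's central structural claim (what determines $\varPi u,\varPi q$, and where the hypothesis $\theta_q^s\theta_u^r-\theta_u^s\theta_q^r\neq0$ enters) is incorrect, so the argument would have to be rebuilt around the coupled $5\times 5$ system before either your scaling route or the paper's explicit route can be carried out.
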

The proof of this lemma is given in Appendix \ref{sec:appendixB}.

The last lemma is about the optimal convergence of the initial approximations. It is usual to require that the projection of the error at the initial step $\epsilon(0)$ satisfies certain condition for the solution at later time to have optimal convergence, see also \cite{XuShu12, Dong17}.

\begin{lemma}[On the initial approximations]
\label{lemma_init}
Under the assumption in Theorem \ref{thm:error}, we have that
\begin{alignat*}{1}
&\lVert \varepsilon_\omega(0) \rVert \le C h^{k+2}, \quad ||\widehat{e}_\omega(0)||_{\infty, \Eh}\le Ch^{2k+1}\quad\mbox{ for }\omega=u, q, p, r, s \\
\intertext{and}
& \lVert \varepsilon_{ut}(0) \rVert \le C h^{k+1}.
\end{alignat*}
\end{lemma}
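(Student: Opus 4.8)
The plan is to exploit the fact, noted at the end of Section \ref{sec:time_discretization}, that the initial approximations at $t=0$ are obtained by applying the HDG solver \eqref{eq:schemest} to the \emph{stationary} fifth-order equation \eqref{eq:stationary_eqn} with $\gamma=1$ and $\tilde f=\mathcal D(u_0)+u_0$, whose exact solution is precisely $u_0=u(\cdot,0)$. Thus the first two estimates are exactly the superconvergence statement for the stationary scheme advertised in the introduction: under the hypotheses of Theorem \ref{thm:error} (so $\alpha=0$, $\beta=-1$, $F\equiv0$, and the stabilization functions $\tau_{su}^\pm=1$, $\tau_{rq}^\pm=-1$, all others zero — which one checks satisfy the nondegeneracy condition $\theta_q^s\theta_u^r-\theta_u^s\theta_q^r\neq0$ of Lemma \ref{lemma_approx}), the stationary HDG solution satisfies $\lVert\varepsilon_\omega(0)\rVert\le Ch^{k+2}$ and $\lVert\widehat e_\omega(0)\rVert_{\infty,\Eh}\le Ch^{2k+1}$ for $\omega=u,q,p,r,s$. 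I would either cite the stationary error-analysis result proved elsewhere in the paper (the duality/Green's-function argument) or, if it is not stated as a standalone proposition, reproduce its conclusion here: subtract the stationary scheme from the exact equations to get the error equations in $\varepsilon$, use the projection $\varPi$ so that the consistency terms involve only $\delta_\omega$, apply the energy identity for the stationary problem to bound $\lVert\varepsilon\rVert$ by $\lVert\delta\rVert\le Ch^{k+1}$, and then run the standard HDG duality argument with the Green's function of the adjoint fifth-order operator to lift this by one order to $h^{k+2}$ for the volume terms and to $h^{2k+1}$ for the numerical traces.

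For the last estimate, $\lVert\varepsilon_{ut}(0)\rVert\le Ch^{k+1}$, the idea is that $u_{ht}(0)$ is \emph{determined} by the semi-discrete scheme \eqref{eq:scheme} at $t=0$: equation \eqref{eq:scheme5} with $t=0$ reads $(u_{ht}(0),\psi)_{I_i}=(f,\psi)_{I_i}+(\calF(s_h,p_h,u_h)(0),\psi_x)_{I_i}-\langle\widehat\calF_h(0),\psi n\rangle_{\partial I_i}$, and since the initial data for $q_h,p_h,r_h,s_h$ already satisfy \eqref{eq:scheme1}--\eqref{eq:scheme4} (as remarked at the end of Section \ref{sec:time_discretization}), the right-hand side is controlled by the already-established bounds on $\varepsilon_p(0),\varepsilon_s(0)$ and $\widehat e_p(0),\widehat e_s(0)$. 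Concretely, I would write the error equation for $\varepsilon_{ut}(0)=\varPi u_t(0)-u_{ht}(0)$ by testing the difference of the exact relation $u_t(0)+\calF(s,p,u)_x(0)=f$ against $\psi\in W_h^k$ and subtracting \eqref{eq:scheme5}; choosing $\psi=\varepsilon_{ut}(0)$ and using integration by parts together with the definition of the projection \eqref{eq:proj} and of the flux $\widehat\calF_h$, the right-hand side becomes a sum of $L^2$-inner products of $\varepsilon_{ut}(0)$ with terms of the form $\delta_{ut}(0)$, $\alpha\varepsilon_p(0)+\beta\varepsilon_s(0)$ and boundary contributions $\widehat e_p(0),\widehat e_s(0)$; Cauchy--Schwarz and the bounds $\lVert\delta_{ut}(0)\rVert\le Ch^{k+1}$ (Lemma \ref{lemma_approx} applied to $u_t(0)$), $\lVert\varepsilon_p(0)\rVert+\lVert\varepsilon_s(0)\rVert\le Ch^{k+2}$, together with inverse/trace inequalities to absorb the face terms, yield $\lVert\varepsilon_{ut}(0)\rVert\le Ch^{k+1}$.

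The main obstacle is the second part: unlike the volume variables, $u_{ht}(0)$ is not the output of a clean elliptic solve, so one must carefully set up its error equation and correctly identify which numerical-trace terms appear, then control the boundary terms $\langle\widehat e_{\calF}(0),\varepsilon_{ut}(0)n\rangle$ using a trace inequality $\lVert\varepsilon_{ut}(0)\rVert_{\partial\calT_h}\le Ch^{-1/2}\lVert\varepsilon_{ut}(0)\rVert$ against the $h^{k+2}$-small flux errors — the bookkeeping here, and verifying that no term forces a loss below $h^{k+1}$, is the delicate point. A secondary, more routine, issue is checking that the specific stabilization choice of Theorem \ref{thm:error} meets the nondegeneracy hypothesis of Lemma \ref{lemma_approx} so that the projection estimates are available; with $\tau_{su}^\pm=1$, $\tau_{rq}^\pm=-1$ and the rest zero one computes $\theta_u^s=\tau_{su}^-+\tau_{su}^+-\tau_{sp}^-\tau_{pu}^+=2$, $\theta_q^s=0$, $\theta_u^r=0$, $\theta_q^r=\tau_{rq}^-+\tau_{rq}^+-\tau_{rp}^-\tau_{pq}^+=-2$, so $\theta_q^s\theta_u^r-\theta_u^s\theta_q^r=0-2\cdot0=0$ — wait, that vanishes, so in fact one should instead verify the determinant with the roles as actually needed, i.e. $\theta_u^s\theta_q^r-\theta_q^s\theta_u^r=2\cdot(-2)-0=-4\neq0$, which is the correct nondegeneracy condition up to the ordering of indices; I would make sure the statement of Lemma \ref{lemma_approx} is quoted with the index convention under which this is nonzero.
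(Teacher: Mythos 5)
Your proposal is correct and follows essentially the same route as the paper: the first two bounds are exactly the superconvergence results for the stationary HDG solve (proved in the appendix by the duality argument you describe), and the bound on $\varepsilon_{ut}(0)$ is obtained, as in the paper, by testing the fifth error equation at $t=0$ with $\psi=\varepsilon_{ut}(0)$ and combining Cauchy--Schwarz with inverse and trace inequalities against the already-established $h^{k+2}$ and $h^{2k+1}$ estimates for $\varepsilon_s(0)$ and $\widehat{\varepsilon}_s(0)$. (Your determinant check contains only an arithmetic slip: with $\theta_q^s=\theta_u^r=0$, $\theta_u^s=2$, $\theta_q^r=-2$ one gets $\theta_q^s\theta_u^r-\theta_u^s\theta_q^r=4\neq 0$ directly, so no reordering of indices is needed.)
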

The proof of this Lemma involves the  error analysis of the HDG method for the stationary fifth-order equation. It is lengthy, thus we give a detailed sketch in Appendix \ref{sec:appendixC}, and proceed here to prove the main theorem in the remaining of this section.

\noindent {\bf Proof of Theorem \ref{thm:error}:} For simplicity, we assume $\alpha = 0$.  We first add the six energy identities \eqref{eq:energyidentities} to obtain
\[
\frac{d}{dt} \lVert \varepsilon(t) \rVert^2 + S + T = 0 \quad \mbox{ with } S = \sum_{i=1}^6 S_i, \quad T = \sum_{i=1}^6 T_i.
\]
Next, we integrate it from $0$ to $t$ and apply the result $T \ge 0$ from Lemma \ref{lemma_energy}. We have
\[
\lVert \varepsilon(t) \rVert^2 \le  \lVert \varepsilon(0) \rVert^2 + \left| \int_0^t S(s) ds\right|.
\]

To estimate $\int_0^t S(s) ds$, we rewrite $S$ as $S = I_1 + I_2 + I_3$ with

\begin{alignat*}{1}
I_1 = & (\delta_{ut}, \varepsilon_u) - (\delta_q, \beta \varepsilon_s) + (\delta_p, \beta \varepsilon_r) - (\delta_r, \beta \varepsilon_p) + (\delta_s, \beta \varepsilon_q)\\
& +(\delta_{qt}, \varepsilon_q) + (\delta_p, \varepsilon_{ut}) +(\delta_{pt}, \varepsilon_{p})  - (\delta_{qt}, \varepsilon_{r}) - - (\delta_{s}, \varepsilon_{ut})\\
&+(\delta_{rt}, \varepsilon_{r}) - (\delta_{pt}, \varepsilon_{s})  - \frac{1}{\beta}(\delta_{qt}, \varepsilon_{ut})
+(\delta_{st}, \varepsilon_s) + \frac{1}{\beta}(\delta_{rt}, \varepsilon_{ut}) + (\delta_{u_{tt}}, \varepsilon_{ut}),\\
I_2 = & (\delta_{r}, \varepsilon_{qt}) + (\delta_{s}, \varepsilon_{pt}) + \frac{1}{\beta}(\delta_{ut}, \varepsilon_{qt})
- \frac{1}{\beta}(\delta_{ut}, \varepsilon_{rt})  + \frac{1}{\beta}(\delta_{qt}, \varepsilon_{pt}) - \frac{1}{\beta}(\delta_{pt}, \varepsilon_{qt})\\
&  - (\delta_{qt}, \beta \varepsilon_{st}) + (\delta_{pt}, \beta \varepsilon_{rt}) + (\delta_{st}, \beta \varepsilon_{qt}) - (\delta_{rt}, \beta \varepsilon_{pt}),\\
I_3 = & (\varepsilon_p, \varepsilon_{ut})+(\varepsilon_s, \varepsilon_{ut}).
\end{alignat*}
Using the Cauchy inequality and the approximation property of the projection Lemma \ref{lemma_approx}, we know
$|I_1| \le C h^{k+1} \lVert \varepsilon \rVert$ which means that
\[
\int_0^t \left | I_1 \right | dt \le C h^{k+1} \int_0^t \lVert \varepsilon(s) \rVert ds \le \frac{C}{\epsilon} T h^{2k+2} + C \epsilon \int_0^t \lVert \varepsilon \rVert^2 dt.
\]
For $I_2$, we have
\begin{alignat*}{1}
\int_0^t I_2 dt \le & \left[ (\delta_r, \varepsilon_q) + (\delta_s, \varepsilon_p) + \frac{1}{\beta}(\delta_{ut}, \varepsilon_q) - \frac{1}{\beta} (\delta_{ut}, \varepsilon_r) + \frac{1}{\beta} (\delta_{qt}, \varepsilon_p) - \frac{1}{\beta} (\delta_{pt}, \varepsilon_q) \right]_0^t\\
&+ \int_0^t \left[ - (\delta_{rt}, \varepsilon_q) - (\delta_{st}, \varepsilon_p) - \frac{1}{\beta}(\delta_{utt}, \varepsilon_q) + \frac{1}{\beta} (\delta_{utt}, \varepsilon_r) - \frac{1}{\beta} (\delta_{qtt}, \varepsilon_p) + \frac{1}{\beta} (\delta_{ptt}, \varepsilon_q) \right] dt,
\end{alignat*}
which means
\[
\left| \int_0^t I_2 dt\right| \le C h^{k+1} \left( \lVert \varepsilon(0) + \varepsilon(t) \rVert\right) + C h^{k+1} \int_0^t \lVert \varepsilon(s) \rVert ds.
\]
As to $I_3$, we have $|I_3| \le (\lVert \varepsilon_p \rVert+\lVert \varepsilon_s \rVert )  \lVert \varepsilon_{ut} \rVert \le \lVert \varepsilon \rVert^2$ which means that
\[
\int_0^t | I_3 | dt \le \int_0^t \lVert \varepsilon \rVert^2 dt.
\]
Putting these three inequalities together, we have
\begin{alignat*}{1}
\left | \int_0^t S dt\right | \le & \int_0^t |I_1| dt + \left| \int_0^t I_2 dt\right| + \int_0^t |I_3 | dt\\
\le & C (1 + t) h^{2k + 2} + C \lVert \varepsilon(0) \rVert^2 + \frac{1}{2} \lVert \varepsilon(t) \rVert^2 + C \int_0^t  \lVert \varepsilon(t) \rVert^2 dt.
\end{alignat*}
Hence, we have
\[
 \lVert \varepsilon(t) \rVert^2 \le C (1 + t) h^{2k + 2} + C \lVert \varepsilon(0) \rVert^2 + C \int_0^t  \lVert \varepsilon(t) \rVert^2 dt.
\]

As a final step, we apply $\lVert \varepsilon(0) \rVert \le C h^{k+1}$ from Lemma \ref{lemma_init} and the Gronwall's inequality to obtain $\lVert \varepsilon(t) \rVert^2 \le C h^{2 k + 2}$ which finishes the proof of Theorem \ref{thm:error}.

\section{Numerical Results}
\label{sec:numericaltest}

In this section, we test our scheme on four %six
test problems to corroborate our theoretical results. The first two %three
problems have the periodic boundary condition \eqref{eq:BC_P} while the last two %three
are furnished with Dirichlet boundary condition \eqref{eq:BC_D}.
The spatial discretization polynomial degree $k$ is set to be between $0$ and $3$.
For time discretization, we use the second-order midpoint rule described in Section \ref{sec:time_discretization}.

\subsection{Test with periodic boundary condition}

For the first two 
tests, the components of the stabilization function  are taken to be $\tau_{rq}^\pm =-1, \tau_{su}^\pm=1$, $\tau_\calF=|F'(\widehat{u}_h)|$ and all other $\tau$ are zero, which satisfy \eqref{eq:tau_cond}.
We take $h = 2^{-n}$ for $n = 3, 4, 5, 6, 7$ for Problem 1 and Problem 2. The step size for time discretization is
$\Delta t = 0.1 \times h$ for $k = 0, 1$, $\Delta t = 0.1 \times h^2$ for $k = 2, 3$. This is just to ensure that the temporal errors are negligible comparing with the space error. We compute the orders of convergence of $p_{h}, q_{h}, u_{h}, r_{h}, s_{h}$ at the final time $T = 0.1$.

\subsubsection*{Test problems} The characteristics for the two %three
problems are given in Table \ref{tab:problems_1to3}.

\begin{table}
\begin{center}
    \renewcommand{\tabcolsep}{0.4cm}
    \renewcommand{\arraystretch}{1.3}
\begin{tabular}{|c|c|c|}
\hline
 & Problem 1 & Problem 2 \\
 \hline
 Domain & $[0, 2\pi]$ & $[0, 2\pi]$\\
 \hline
 $(\alpha, \beta)$ & $(0, -1)$ & $(1, -1)$ \\
 \hline
 $F$ & 0 & $u + u^2 + u^3$ \\
 \hline
 $u(x,t)$ & $\sin(x+t)$ & $\sin(x+t)$ \\
 \hline
\end{tabular}
\end{center}
\caption{Setup for the first two problems.}
\label{tab:problems_1to3}
\end{table}

\subsubsection*{Results} We compute the orders of convergence for $p_{h}, q_{h}, u_{h}, r_{h}, s_{h}$ at the final time $T = 0.1$ for the linear equation in Problem 1 and the nonlinear equation in Problem 2. The results are listed in the Table \ref{tab:conv1} and Table \ref{tab:conv2} which clearly show optimal convergence.

\begin{table}
\begin{center}
    \renewcommand{\arraystretch}{1.3}
\begin{tabular}{|c|c|c|c|c|c|c|c|c|c|c|}
\hline
k & $e_{u}$ & order & $e_{q}$ & order & $e_{p}$ & order & $e_{r}$ & order & $e_{s}$ & order \\
\hline
\multirow{3}{*}{0}& 0.3789 & -    & 0.7027 & -     & 0.9919 & -    & 1.2084& - &1.4091 & - \\
                  & 0.2533 & 0.58 & 0.4138 & 0.76  & 0.6190 & 0.68 & 0.8222 &0.56& 1.0291& 0.45\\
                  & 0.1378 & 0.88 & 0.2182 & 0.92  & 0.3449 & 0.84 & 0.4805&0.78 & 0.6198& 0.73\\
                  & 7.04e-2& 0.97 & 0.1101 & 0.99  & 0.1798 & 0.94 & 0.2558&0.91 & 0.3339& 0.89\\
                  & 3.51e-2& 1.00 & 5.49e-2& 1.01  & 9.12e-2& 0.98 & 0.1310&0.97 & 0.1719& 0.96\\
\hline
\multirow{3}{*}{1}
&3.27e-2& -    & 5.01e-2 & -  &  5.21e-2& -    & 5.70e-2 & - &6.58e-2& - \\
&1.09e-2 &1.58 &1.30e-2 & 1.95& 1.32e-2 & 1.99 &1.35e-2  &2.07& 1.42e-2& 2.21\\
&3.16e-3 &1.78 &3.28e-3 & 1.99& 3.29e-3 & 2.00 &3.31e-3  &2.02& 3.36e-3& 2.08\\
&8.20e-4 &1.94 &8.21e-4 & 2.00& 8.22e-4 & 2.00 &8.24e-4  &2.01& 8.26e-4& 2.02\\
&2.05e-4 &2.00 &2.05e-4 & 2.00& 2.06e-4 & 2.00 &2.06e-4  &2.00& 2.06e-4& 2.01\\
\hline
\multirow{3}{*}{2}
&2.60e-3& -     & 3.12e-3 & -  & 3.20e-3 & -   & 3.18e-3& - &3.21e-3 & - \\
&3.76e-4 &2.79 &4.05e-4 & 2.95& 4.00e-4 & 3.00 &3.99e-4  &2.99& 4.30e-4& 2.90\\
&4.98e-5 &2.92  &4.98e-5 & 3.02& 5.00e-5 & 3.00&5.00e-5  &3.00& 5.02e-5& 3.10\\
&6.25e-6 &2.99  &6.24e-6 & 3.00& 6.25e-6 & 3.00&6.25e-6  &3.00& 6.23e-6& 3.01\\
&7.81e-7 &3.00  &7.81e-7 & 3.00& 7.81e-7 & 3.00&7.81e-7  &3.00& 7.78e-7& 3.00\\
\hline
\multirow{3}{*}{3}
&1.65e-4& -   & 1.76e-4 & -  & 1.77e-4 & -     & 1.76e-4& - &1.80e-4 & - \\
&1.20e-5 &3.79&1.21e-5 & 3.86& 1.21e-5 & 3.87  &1.21e-5 &3.86& 1.25e-5& 3.85\\
&8.06e-7 &3.89&8.05e-7 & 3.91& 8.06e-7 & 3.91  &8.06e-7  &3.91& 8.12e-7& 3.94\\
&5.04e-8 &4.00&5.03e-8 & 4.00& 5.04e-8 & 4.00  &5.04e-8  &4.00& 5.07e-8& 4.00\\
&3.15e-9 &4.00&3.15e-9 & 4.00& 3.15e-9 & 4.00  &3.15e-9  &4.00& 3.19e-9& 3.99\\
\hline
\end{tabular}
\end{center}
\caption{Errors and orders of convergence for problem 1.}
\label{tab:conv1}
\end{table}

\begin{table}
\begin{center}
    \renewcommand{\arraystretch}{1.3}
\begin{tabular}{|c|c|c|c|c|c|c|c|c|c|c|}
\hline
k & $e_{u}$ & order & $e_{q}$ & order & $e_{p}$ & order & $e_{r}$ & order & $e_{s}$ & order \\
\hline
\multirow{3}{*}{0}
&0.3876 & -     & 0.6285& -     & 0.9640 & -   & 1.2210& - &1.4680 & - \\
& 0.2597 &0.57  & 0.3547 & 0.83 & 0.5743 & 0.75& 0.7951 &0.62& 1.0260& 0.51\\
& 0.1397 & 0.89 & 0.1772 & 1.00 & 0.3012& 0.93 & 0.4405&0.85 & 0.590& 0.80\\
& 0.0710& 0.98  & 0.0865& 1.03  & 0.1513& 0.99 & 0.2277&0.95 & 0.3110& 0.92\\
& 3.55e-2& 1.00 & 4.23e-2& 1.03 & 7.53e-2& 1.01& 0.1153&0.98 & 0.1594& 0.96\\
\hline
\multirow{3}{*}{1}
&5.26e-2& -      & 5.56e-2 & -  &  5.19e-2& -    & 5.70e-2 & - &0.1971& - \\
& 4.58e-3 & 2.22 &1.35e-2 & 2.04& 1.31e-2 & 1.98 &1.31e-2  &2.01& 5.76e-2& 1.78\\
&1.07e-3 &2.10   &3.31e-3 & 2.03& 3.29e-3 & 2.00&3.28e-3  &2.00& 1.51e-2& 1.94\\
& 2.63e-4 &2.03  &8.23e-4 & 2.01& 8.22e-4 & 2.00&8.22e-4  &2.00& 3.81e-3& 1.98\\
& 6.55e-5 &2.01  &2.06e-4 & 2.00& 2.06e-4 & 2.00&2.06e-4  &2.00& 9.55e-4& 2.00\\
\hline
\multirow{3}{*}{2}
&1.44e-3 & -   & 3.31e-3 & -  & 3.28e-3 & -    & 3.46e-3& - &9.11e-3 & - \\
&1.79e-4 &3.01 &4.05e-4 & 2.96& 4.03e-4 & 3.02 &4.10e-4  &3.08& 1.22e-3& 2.91\\
&2.23e-5 &3.00 &4.98e-5 & 3.02& 5.01e-5 & 3.00 &5.03e-5  &3.03& 1.50e-4& 3.02\\
&2.79e-6 &3.00 &6.24e-6 & 3.00& 6.25e-6 & 3.00 &6.26e-6  &3.01& 1.88e-5& 2.99\\
&3.49e-7 &3.00 &7.81e-7 & 3.00& 7.81e-7 & 3.00 &7.82e-7  &3.00& 2.35e-6& 3.00\\
\hline
\multirow{3}{*}{3}
&1.25e-4 & -   & 2.21e-4 & -  & 3.14e-4 & -    & 5.53e-4& - &1.24e-4 & - \\
&9.68e-6 &3.69 &1.62e-5 & 3.77& 2.52e-5 & 3.64 &4.85e-5 &3.51& 1.14e-5& 3.44\\
&6.86e-7 &3.82 &1.11e-6 & 3.87& 1.73e-6 & 3.86 &3.22e-6  &3.91& 7.06e-6& 4.01\\
&4.28e-8 &4.00 &6.90e-8 & 4.00& 1.08e-7 & 4.00 &2.00e-7  &4.01& 4.35e-7& 4.02\\
&2.68e-9 &4.00 &4.31e-9 & 4.00& 6.75e-9 & 4.00 &1.26e-8  &3.99& 2.75e-8& 3.98\\
\hline
\end{tabular}
\end{center}
\caption{Errors and orders of convergence for problem 2.}
\label{tab:conv2}
\end{table}

\subsection{Tests with Dirichlet boundary condition}

Although our error analysis is for KdV equations with periodic boundary conditions, we would like to investigate the convergence rates of our HDG method for Dirichlet boundary conditions. In test problems 3 and 4, the stabilization functions are taken to be $\tau_{su}^\pm =\tau_{sq}^\pm =\tau_{ru}^\pm=1, \tau_{rq}^\pm=-1$, $\tau_\calF=|F'(\widehat{u}_h)|$ and others zero. 
The step size in time is again set to be $\Delta t = 0.1 \times h$ for $k = 0, 1$, and $\Delta t = 0.1 \times h^2$ for $k = 2, 3$.

\subsubsection*{Test problems} The characteristics for the two problems are given in Table \ref{tab:problems_4to6}.

\begin{table}
\begin{center}
    \renewcommand{\tabcolsep}{0.4cm}
    \renewcommand{\arraystretch}{1.3}
\begin{tabular}{|c|c|c|}
\hline
 & Problem 3 & Problem 4\\
 \hline
 Domain & $[0, \pi]$ & $[0, \pi]$ \\
 \hline
 $(\alpha, \beta)$ & $(0, -1)$ & $(1, -1)$\\
 \hline
 $F$ & 0 & $u + u^2 + u^3$\\
 \hline
 $u(x,t)$ & $t \cdot \sin(x)$ & $t \cdot \sin(x)$ \\
 \hline
\end{tabular}
\end{center}
\caption{Setup for the next two problems.}
\label{tab:problems_4to6}
\end{table}

\subsubsection*{Results}
We show the convergence orders in Tables \ref{tab:conv4} - \ref{tab:conv5} for the errors in all the five variables for the linear Problems 3 and the nonlinear problem 4. Optimal convergence rates are achieved for both the linear and the nonlinear cases for all the variables.

\begin{table}
\begin{center}
    \renewcommand{\arraystretch}{1.3}
\begin{tabular}{|c|c|c|c|c|c|c|c|c|c|c|}
\hline
k & $e_{u}$ & order & $e_{q}$ & order & $e_{p}$ & order & $e_{r}$ & order & $e_{s}$ & order \\
\hline
\multirow{3}{*}{0}
& 9.56e-3 & -   & 6.99e-3 & -    & 2.16e-2 & -    & 5.48e-2& - &9.03e-2 & - \\
& 7.20e-3 &0.41 & 4.38e-3 & 0.67 & 1.20e-2 & 0.85 & 3.17e-2 &0.79& 6.98e-2& 0.37\\
& 4.63e-3 &0.64 & 2.67e-3 & 0.71 & 6.27e-3 & 0.94 & 1.89e-2 &0.74& 4.95e-2& 0.50\\
& 2.67e-3 & 0.80& 1.51e-3 & 0.83 & 3.31e-3& 0.92   & 1.11e-2&0.76 & 3.00e-2& 0.72\\
& 1.44e-3& 0.89 & 8.02e-4 & 0.91 & 1.73e-3& 0.93  & 6.18e-3&0.85 & 1.65e-2& 0.86\\
\hline
\multirow{3}{*}{1}
& 4.49e-4 &-     &7.58e-4 & -   & 9.75e-4 & -   & 1.25e-3  &-&  1.92e-3& -\\
& 1.36e-4 &1.72  &1.76e-4 & 2.11& 2.39e-4 & 2.03& 3.21e-4  &1.97&  3.72e-4& 2.37\\
& 3.75e-5 &1.86  &4.24e-5 & 2.05& 5.90e-5 & 2.02&8.12e-5  &1.98& 8.49e-5& 2.13\\
& 9.82e-6 &1.93  &1.04e-5 & 2.02& 1.46e-5 & 2.01&2.04e-5  &1.99& 2.07e-5& 2.04\\
& 2.51e-6 &1.97  &2.59e-6 & 2.02& 3.65e-6 & 2.01&5.12e-6  &2.00& 5.14e-6& 2.01\\
\hline
\multirow{3}{*}{2}
&1.01e-4 & -   &2.00e-4 & -   & 2.43e-4 & -    & 2.84e-4& - &3.52e-4 & - \\
&1.60e-5 &2.66 &2.20e-5 & 3.19& 2.94e-5 & 3.04 &3.81e-5  &2.89& 4.00e-5& 3.14\\
&2.24e-6 &2.83 &2.61e-6 & 3.08& 3.61e-6 & 3.03 &4.90e-6  &2.96& 5.01e-6& 3.00\\
&2.96e-7 &2.92 &3.19e-7 & 3.03& 4.46e-7 & 3.01 &6.19e-7  &2.98& 6.24e-7& 3.00\\
&3.81e-8 &2.96 &3.94e-8 & 3.01& 5.55e-8 & 3.01 &7.78e-8  &2.99& 7.80e-8& 3.00\\
\hline
\multirow{3}{*}{3}
&5.44e-6& -     & 8.81e-6 & -   & 1.13e-5 & -   & 1.41e-5& - &1.58e-5 & - \\
&4.06e-7 &3.74  &5.05e-7 & 4.12 & 6.89e-7 & 4.03&9.19e-7 &3.94& 9.43e-7& 4.06\\
&2.75e-8 &3.88  &3.05e-8 & 4.05 & 4.25e-8 & 4.02&5.84e-8  &3.97& 5.92e-8& 3.99\\
&1.79e-9 &3.94  &1.88e-9 & 4.02 & 2.64e-9 & 4.01&3.68e-9  &3.99& 3.70e-9& 4.00\\
&1.14e-10 &3.97 &1.17e-10 & 4.01& 1.64e-10& 4.01 &2.31e-10  &4.00& 2.31e-10& 4.00\\
\hline
\end{tabular}
\end{center}
\caption{Error and convergence orders for problem \rev{3}.}
\label{tab:conv4}
\end{table}

\begin{table}
\begin{center}
    \renewcommand{\arraystretch}{1.3}
\begin{tabular}{|c|c|c|c|c|c|c|c|c|c|c|}
\hline
k & $e_{u}$ & order & $e_{q}$ & order & $e_{p}$ & order & $e_{r}$ & order & $e_{s}$ & order \\
\hline
\multirow{3}{*}{0}
&9.01e-3  & -   & 7.66e-3 & -    & 2.20e-2 & -    &5.53e-2& - &8.45e-2 & - \\
& 6.29e-3 &0.52 & 4.22e-3 & 0.86 & 1.22e-2 & 0.85 & 3.11e-2 &0.83& 5.83e-2& 0.54\\
& 3.86e-3 &0.70 & 2.47e-3 & 0.77 & 6.29e-3 & 0.96 & 1.67e-2 &0.89& 3.80e-2& 0.62\\
& 2.19e-3 &0.82 & 1.40e-3 & 0.82 & 3.19e-3 & 0.98 & 8.94e-3&0.90 & 2.22e-2& 0.77\\
& 1.18e-3 &0.89 & 7.53e-4 & 0.89 & 1.61e-3 & 0.98 & 4.70e-3&0.93 & 1.21e-2& 0.88\\
\hline
\multirow{3}{*}{1}
& 3.50e-4& -   &  7.95e-4 & - &  9.73e-4& -    &1.23e-3  & - & 2.50e-3& - \\
&9.46e-5 &1.89 &1.87e-4 & 2.09& 2.39e-4 & 2.03 & 3.20e-4  &1.94&  5.72e-4& 2.13\\
&2.48e-5 &1.93 &4.51e-5 & 2.05& 5.90e-5 & 2.02 & 8.12e-5  &1.98&  1.40e-4& 2.03\\
&6.37e-6 &1.96 &1.11e-5 & 2.03& 1.46e-5 & 2.01 &2.04e-5  &1.99& 3.50e-5& 2.00\\
& 1.61e-6 &1.98&2.74e-6 & 2.01& 3.65e-6 & 2.01  &5.12e-6  &2.00& 8.75e-6& 2.00\\
\hline
\multirow{3}{*}{2}
&8.02e-5 & -   & 2.09e-4 & -  & 2.45e-4 & -    & 2.93e-4& - &5.14e-4 & - \\
&1.12e-5 &2.84 &2.37e-5 & 3.14& 2.95e-5 & 3.05 &3.86e-5  &2.92& 6.43e-5& 3.00\\
&1.49e-6 &2.91 &2.82e-6 & 3.07& 3.61e-6 & 3.03 &4.92e-6  &2.97& 8.16e-6& 2.98\\
&1.92e-7 &2.95 &3.44e-7 & 3.04& 4.46e-7 & 3.01 &6.21e-7  &2.99& 1.02e-6& 3.00\\
&2.45e-8 &2.98 &4.25e-8 & 3.02& 5.55e-8 & 3.01 &7.79e-8  &2.99& 1.28e-7& 3.00\\
\hline
\multirow{3}{*}{3}
&4.17e-6 & -    & 9.20e-6 & -   & 1.13e-5 & -   & 1.43e-5& - &2.51e-5 & - \\
&2.79e-7 &3.90  &5.36e-7 & 4.10 & 6.90e-7 & 4.04&9.25e-7 &3.95& 1.59e-6& 3.98\\
&1.81e-8 &3.94  &3.24e-8 & 4.05 & 4.25e-8 & 4.02&5.86e-8  &3.98& 1.01e-7& 3.98\\
&1.16e-9 &3.97  &2.00e-9 & 4.03 & 2.64e-9 & 4.01&3.69e-9  &3.99& 6.30e-9& 4.00\\
&7.30e-11 &3.99 &1.23e-10 & 4.01& 1.64e-10& 4.01 &2.31e-9  &4.00& 3.94e-10& 4.00\\
\hline
\end{tabular}
\end{center}
\caption{Error and convergence orders for problem \rev{4}.}
\label{tab:conv5}
\end{table}

\section{Concluding Remarks}
\label{sec:conclusion}
In this paper, we introduce and analyze the first HDG method for fifth-order KdV type equations as well as fifth-order stationary equations. Our analysis show that the HDG approximate solutions have optimal convergence rates for linearized equations in the time-dependent case and superconvergence properties in the stationary case. Numerical results indicate that the method also has optimal convergence rates for nonlinear KdV equations. Our future work is to develop HDG methods for multidimensional KdV type equations and systems that involve third- or fifth-order derivatives.

\appendix

\section{Proof of Lemma \ref{lemma_energy}}
\label{sec:appendixA}

First, we derive a suitable error equation. Toward that end, we note that the exact solutions also satisfy \eqref{eq:scheme}. Subtracting \eqref{eq:scheme} from its counterpart \rev{satisfied} by the exact solution gives us
\[
\begin{cases}
(e_q, v) + (e_u, v_x) - \langle \widehat{e}_u, v \cdot n \rangle = 0,\\
(e_p, z) + (e_q, z_x) - \langle \widehat{e}_q, z \cdot n \rangle = 0,\\
(e_r, w) + (e_p, w_x) - \langle \widehat{e}_p, w \cdot n \rangle = 0,\\
(e_s, \phi) + (e_r, \phi_x) - \langle \widehat{e}_r, \phi \cdot n \rangle = 0,\\
(e_{u_t}, \psi) - (\alpha e_p + \beta e_s, \psi_x) + \langle \alpha \widehat{e}_p + \beta \widehat{e}_s, \psi \cdot n \rangle = 0.\\
\end{cases}
\]
We also have that
\[
\jmp{\widehat{e}_p \cdot n} = 0, \quad \jmp{\widehat{e}_r \cdot n} = 0, \quad \jmp{\widehat{e}_s \cdot n} = 0.
\]

If we define the projections of the numerical flux errors \rev{as}
\begin{subequations}
\begin{alignat*}{1}
\widehat{\varepsilon}_u & = \widehat{e}_u, \quad \widehat{\varepsilon}_q  = \widehat{e}_q, \quad \widehat{\varepsilon}_p^{\;-} = \widehat{e}_p^{\;-}, \\
\widehat{\varepsilon}_\omega^{\;+} & = \epsilon_\omega^+ + \tau_{\omega u}^+ (\widehat{\varepsilon}_u - \varepsilon_u^+) n^+ + \tau_{\omega q}^+ (\widehat{\varepsilon}_q - \varepsilon_q^+) n^+ \mbox{ for } \omega \in \{p, r, s\},\\
\widehat{\varepsilon}_\omega^{\;-} & = \epsilon_\omega^- + \tau_{\omega u}^- (\widehat{\varepsilon}_u - \varepsilon_u^-) n^-  + \tau_{\omega q}^- (\widehat{\varepsilon}_q - \varepsilon_q^-) n^- + \tau_{\omega p}^-(\widehat{\varepsilon}_p^{\;-} - \varepsilon_p^-) n^- \mbox{ for } \omega \in \{r, s\},\\
\end{alignat*}
\end{subequations}
it is easy to show that
\[
\widehat{\varepsilon}_\omega = \widehat{e}_\omega, \quad \mbox{ for } \omega \in \{u, q, p, r, s\}
\]
by using the definition of the projection $\varPi$. Realizing that $e_\omega = \delta_\omega + \varepsilon_\omega$ and using the definition of the projection $\varPi$, we can rewrite the error equation above as
\begin{subequations}\label{eq:error}
\begin{alignat}{1}
\label{eq:error1}
&({\varepsilon}_q, v) + (\delta_q, v) + (\varepsilon_u, v_x) - \langle \widehat{\varepsilon}_u, v \cdot n \rangle = 0,\\
\label{eq:error2}
&({\varepsilon}_p, z) + (\delta_p, z) + (\varepsilon_q, z_x) - \langle \widehat{\varepsilon}_q, z \cdot n \rangle = 0,\\
\label{eq:error3}
&({\varepsilon}_r, w) + (\delta_r, w) + (\varepsilon_p, w_x) - \langle \widehat{\varepsilon}_p, w \cdot n \rangle = 0,\\
\label{eq:error4}
&({\varepsilon}_s, \phi) + (\delta_s, \phi) + (\varepsilon_r, \phi_x) - \langle \widehat{\varepsilon}_r, \phi \cdot n \rangle = 0,\\
\label{eq:error5}
&({\varepsilon}_{u_t}, \psi) + (\delta_{u_t}, \psi) - (\alpha \varepsilon_p + \beta \varepsilon_s, \psi_x) + \langle \alpha \widehat{\varepsilon}_p + \beta \widehat{\varepsilon}_s, \psi \cdot n \rangle = 0,
\end{alignat}
\end{subequations}
and that
\begin{equation*}
%\label{eq:error6}
\jmp{\widehat{\varepsilon}_p \cdot n} = 0, \quad \jmp{\widehat{\varepsilon}_r \cdot n} = 0, \quad \jmp{\widehat{\varepsilon}_s \cdot n} = 0.
\end{equation*}

To derive the energy identities, we will also need the time-derivatives of the equations \eqref{eq:error1} - \eqref{eq:error5}.
\begin{subequations}\label{eq:error_t}
\begin{alignat}{1}
\label{eq:error1_t}
&({\varepsilon}_{q_t}, v) + (\delta_{q_t}, v) + (\varepsilon_{u_t}, v_x) - \langle \widehat{\varepsilon}_{u_t}, v \cdot n \rangle = 0,\\
\label{eq:error2_t}
&({\varepsilon}_{p_t}, z) + (\delta_{p_t}, z) + (\varepsilon_{q_t}, z_x) - \langle \widehat{\varepsilon}_{q_t}, z \cdot n \rangle = 0,\\
\label{eq:error3_t}
&({\varepsilon}_{r_t}, w) + (\delta_{r_t}, w) + (\varepsilon_{p_t}, w_x) - \langle \widehat{\varepsilon}_{p_t}, w \cdot n \rangle = 0,\\
\label{eq:error4_t}
&({\varepsilon}_{s_t}, \phi) + (\delta_{s_t}, \phi) + (\varepsilon_{r_t}, \phi_x) - \langle \widehat{\varepsilon}_{r_t}, \phi \cdot n \rangle = 0,\\
\label{eq:error5_t}
&({\varepsilon}_{u_{tt}}, \psi) + (\delta_{u_{tt}}, \psi) - (\alpha \varepsilon_{p_t} + \beta \varepsilon_{s_t}, \psi_x) + \langle \alpha \widehat{\varepsilon}_{p_t} + \beta \widehat{\varepsilon}_{s_t}, \psi \cdot n \rangle = 0.
\end{alignat}
\end{subequations}

We are now ready to prove the six energy identities in Lemma \ref{lemma_energy}. The proofs are quite tedious, but very similar in nature. They also use similar techniques such as integration by parts, and
error equations \eqref{eq:error} and \eqref{eq:error_t}.
For these reasons, we are simply listing the first steps of deriving these six energy identities. The rest for proving each energy identity are simply algebraic manipulations.

Indeed, we take
\[
\psi = \varepsilon_u, \quad v = -(\alpha \varepsilon_p + \beta \varepsilon_s), \quad z = \alpha \varepsilon_q + \beta \varepsilon_r, \quad \phi = \beta \varepsilon_q, \quad w = -\beta \varepsilon_p
\]
in \eqref{eq:error1}--\eqref{eq:error5}, add them together to arrive at \eqref{eq:energyid1}, and
\[
v = \varepsilon_q \mbox{ in \eqref{eq:error1_t}}, \quad z = \varepsilon_{u_t} \mbox{ in \eqref{eq:error2}}
\]
for deriving \eqref{eq:energyid2}, and
\[
z = \varepsilon_p \mbox{ in \eqref{eq:error2_t}}, \quad w = \varepsilon_{q_t} \mbox{ in \eqref{eq:error3}},
\quad v = -\varepsilon_r \mbox{ in \eqref{eq:error1_t}}, \quad
\phi = -\varepsilon_{u_t} \mbox{ in \eqref{eq:error4}}
\]
for getting \eqref{eq:energyid3}, and
\[
w = \varepsilon_r \mbox{ in \eqref{eq:error3_t}}, \quad \phi = \varepsilon_{p_t} \mbox{ in \eqref{eq:error4}}, \quad z = -(\varepsilon_s +\frac{\alpha}{\beta} \varepsilon_p) \mbox{ in \eqref{eq:error2_t}},
\]
\[
 \psi = \frac{1}{\beta} \varepsilon_{q_t} \mbox{ in \eqref{eq:error5}}, \quad v = - \frac{1}{\beta} \varepsilon_{u_t} \mbox{ in \eqref{eq:error1_t} }
\]
for concluding \eqref{eq:energyid4}, and
\[
\phi = \varepsilon_s \mbox{ in \eqref{eq:error4_t}}, \quad \psi = -\frac{1}{\beta} \varepsilon_{r_t} \mbox{ in \eqref{eq:error5}}, \quad w = \frac{1}{\beta} \varepsilon_{u_t} \mbox{ in \eqref{eq:error3_t}}, 
\]
\[
w = -\frac{\alpha}{\beta} \varepsilon_{r_t} \mbox{ in \eqref{eq:error3}}, \quad v = \frac{1}{\beta} \varepsilon_{p_t} \mbox{ in \eqref{eq:error1_t},}\quad z = -\frac{1}{\beta} \varepsilon_{q_t} \mbox{ in \eqref{eq:error2_t}}
\]
to obtain \eqref{eq:energyid5}, and finally
\[
\psi = \varepsilon_{u_t} \mbox{ in \eqref{eq:error5_t}}, \quad v = -( \alpha \varepsilon_{p_t} + \beta \varepsilon_{s_t}) \mbox{ in \eqref{eq:error1_t}}, \quad z = \alpha \varepsilon_{q_t} +\beta \varepsilon_{r_t} \mbox{ in \eqref{eq:error2_t}}, 
\]
\[
\phi = \beta \varepsilon_{q_t} \mbox{ in \eqref{eq:error4_t},} \quad w = -\beta \varepsilon_{p_t} \mbox{ in \eqref{eq:error3_t}}
\]
for proving \eqref{eq:energyid6} to conclude the proof of the six energy identities.

To finish proving this lemma, we only need to establish that $T \ge 0$ which is the purpose of the next lemma.

\begin{lemma}
Let $T$ be defined as in Lemma \ref{lemma_energy}, and the stabilization functions are chosen so that the only nonzero ones are
\[
\tau_{su}^- = \tau_{su}^+ =1, \quad \tau_{rq}^- = \tau_{rq}^+ = -1,
\]
then for $\beta=-1$ we have $T \ge 0$.
\end{lemma}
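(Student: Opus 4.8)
The plan is to rewrite $T$, term by term, as an explicit sum over the mesh nodes of quadratic expressions in a handful of ``jump'' quantities, and then to recognize each nodal contribution as a sum of squares. \emph{First}, I would reduce the flux errors: using the formulas for $\widehat{\varepsilon}_p^{\;\pm},\widehat{\varepsilon}_r^{\;\pm},\widehat{\varepsilon}_s^{\;\pm}$ recorded in the proof of Lemma~\ref{lemma_energy}, together with $\alpha=0$, $\beta=-1$ and the stabilization choice $\tau_{su}^{\pm}=1$, $\tau_{rq}^{\pm}=-1$ (all other $\tau$'s zero), one gets at each node $x_{i-1}^+$ the identities $\widehat{\varepsilon}_p-\varepsilon_p^+=0$, $\widehat{\varepsilon}_r-\varepsilon_r^+=-(\widehat{\varepsilon}_q-\varepsilon_q^+)\,n^+$, $\widehat{\varepsilon}_s-\varepsilon_s^+=(\widehat{\varepsilon}_u-\varepsilon_u^+)\,n^+$, and at each node $x_i^-$ the same three identities with $\varepsilon^+$ replaced by $\varepsilon^-$, except that now $\widehat{\varepsilon}_p^{\;-}-\varepsilon_p^-$ is a genuinely independent quantity since no stabilization function couples it to $\widehat{\varepsilon}_u$ or $\widehat{\varepsilon}_q$; the same relations hold after differentiating in $t$. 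I would abbreviate the surviving jumps by $\mu_\bullet:=\widehat{\varepsilon}_\bullet-\varepsilon_\bullet^+$ at the nodes $x_{i-1}^+$ and $\nu_\bullet:=\widehat{\varepsilon}_\bullet-\varepsilon_\bullet^-$ at the nodes $x_i^-$ (with $\mu_{\bullet t},\nu_{\bullet t}$ the time derivatives), and repeatedly use $(n^{\pm})^2=1$. \emph{Second}, I would split $T=T^++T^-$: every product occurring in $T_1,\dots,T_6$ is built from traces on a single side of a single node (the numerical traces $\widehat{\varepsilon}_u,\widehat{\varepsilon}_q$ being single valued), so $T^+$ collects the contributions at $x_0^+,\dots,x_{N-1}^+$, $T^-$ those at $x_1^-,\dots,x_N^-$, and it suffices to show each nodal form is nonnegative.

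On the ``$+$'' nodes, after substitution $T_2^++T_3^+=0$, the $\mu_u\mu_{qt}$ cross terms of $T_4^+$ and $T_5^+$ cancel, and $T_1^+$, $T_6^+$ supply $\mu_u^2+\mu_q^2$ and $\mu_{ut}^2+\mu_{qt}^2$, so that
\[
T^+=\sum_{x_{i-1}^+}\Bigl(\mu_u^2+\mu_q^2+\tfrac12\mu_{ut}^2+\tfrac12\mu_{qt}^2\Bigr)\ \ge\ 0 .
\]
On the ``$-$'' nodes the extra jump $\nu_p$ prevents such a collapse; collecting all six contributions, the nodal quadratic form becomes
\[
\nu_u^2+\nu_q^2+\tfrac12\nu_p^2+\tfrac32\nu_{ut}^2+\tfrac32\nu_{qt}^2+\tfrac12\nu_{pt}^2
+2\nu_{ut}\nu_q-2\nu_u\nu_{qt}+\nu_p\nu_{qt}-\nu_q\nu_{pt}-\nu_{ut}\nu_{pt},
\]
and completing the square first in $\nu_u$ and then in $\nu_q$ recasts it as
\[
(\nu_u-\nu_{qt})^2+\bigl(\nu_q+\nu_{ut}-\tfrac12\nu_{pt}\bigr)^2+\tfrac12\nu_{ut}^2+\tfrac12(\nu_{qt}+\nu_p)^2+\tfrac14\nu_{pt}^2\ \ge\ 0 .
\]
Summing over the ``$-$'' nodes gives $T^-\ge0$, whence $T=T^++T^-\ge0$.

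The main obstacle is not conceptual---no idea beyond the choice of test functions already used to derive the energy identities is needed---but purely organizational: there are six boundary terms, each a signed combination of several boundary integrals, each splitting into a ``$+$'' and a ``$-$'' piece and involving both the primal jumps and their time derivatives, so one must keep signs straight through the substitutions $n^+=-1$, $n^-=+1$ and $\beta=-1$. The one step that is not mechanical is choosing the right order of square-completions on the ``$-$'' nodes, since a naive pairing of the cross terms there does not directly expose the nonnegativity.
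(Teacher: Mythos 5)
Your proposal is correct and follows essentially the same route as the paper: split $T$ into the contributions $T^+$ and $T^-$ from the two sides of each node, substitute the trace-error relations implied by the chosen stabilization functions, and write each resulting nodal quadratic form as a sum of squares; your quadratic form for $T^-$ agrees exactly with the paper's, and your $T^+$ matches as well. The only (immaterial) difference is the particular square-completion used for $T^-$ — the paper writes it as $(\eta_u-\eta_{qt})^2+\tfrac12(\eta_p+\eta_{qt})^2+\tfrac12(\eta_{pt}-\eta_q-\eta_{ut})^2+\tfrac12(\eta_q+\eta_{ut})^2+\tfrac12\eta_{ut}^2$, which expands to the same form as yours.
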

\begin{proof}
Let's recall from Lemma \ref{lemma_energy} that $T$ contains all the boundary terms. For brevity, we define $\eta_\omega = \widehat{\varepsilon}_\omega - \varepsilon_\omega$ for $\omega \in \{u, q, p, r, s\}$, $T$ can then be rewritten as
\begin{alignat*}{1}
T = & \langle n, -\beta \eta_s \eta_u + \beta \eta_r \eta_q - \frac{\beta}{2} \eta_p^2 \rangle + \langle n, \eta_q \eta_{ut} \rangle + \langle n, \eta_p \eta_{qt} - \eta_r \eta_{ut} \rangle\\
& + \langle n, \eta_r \eta_{pt} - \eta_s \eta_{qt} - \frac{1}{2 \beta} \eta^2_{ut} \rangle
+ \langle n, \eta_s \eta_{rt} + \frac{1}{\beta} \eta_{ut} \eta_{pt} - \frac{1}{2 \beta} \eta^2_{qt} \rangle\\
&+ \langle n, -\beta \eta_{st} \eta_{ut} + \beta \eta_{rt} \eta_{qt} - \frac{\beta}{2} \eta^2_{pt} \rangle\\
:= & T^- + T^+,
\end{alignat*}
where $T^-$ contains those terms on $\partial\calT_h^-$ with $n^-=1$ and $T^+$ on $\partial\calT_h^+$ for $n^+=-1$. It suffices to simplify each of them and show that they are nonnegative.
\begin{alignat*}{1}
T^- = & \langle 1, \eta_u^2\rangle \left( -\beta \tau_{su}^- \right) + \langle 1, \eta_q^2\rangle \left( \beta \tau_{rq}^- \right) + \langle 1, \eta_p^2\rangle \left( -\frac{\beta}{2}\right) \\
& + \langle 1, \eta_{ut}^2\rangle \left( -\frac{1}{2 \beta} -\beta \tau_{su}^- \right)
+ \langle 1, \eta_{qt}^2\rangle \left( -\frac{1}{2 \beta} +\beta \tau_{rq}^- \right)
+ \langle 1, \eta_{pt}^2\rangle \left( -\frac{\beta}{2}\right)\\
& + \langle 1, \eta_u \eta_{ut} \rangle \left( -\tau_{ru}^- + \tau_{su}^- \tau_{ru}^-\right)
+ \langle 1, \eta_q \eta_{qt} \rangle \left( -\tau_{sq}^- + \tau_{sq}^- \tau_{rq}^-\right)
+ \langle 1, \eta_p \eta_{pt} \rangle \left( \tau_{rp}^- + \tau_{sp}^- \tau_{rp}^-\right)\\
& + \langle 1, \eta_u \eta_q \rangle \left( -\beta \tau_{sq}^- + \beta \tau_{ru}^- \right)
+ \langle 1, \eta_u \eta_p \rangle \left( -\beta \tau_{sp}^- \right)
+ \langle 1, \eta_p \eta_q \rangle \left( \beta \tau_{rp}^- \right)\\
& + \langle 1, \eta_q \eta_{ut} \rangle \left( 1 - \tau_{rq}^- + \tau_{sq}^- \tau_{ru}^- \right) + \langle 1, \eta_u \eta_{qt} \rangle \left( - \tau_{su}^- + \tau_{su}^- \tau_{rq}^- \right)\\
& + \langle 1, \eta_p \eta_{qt} \rangle \left( 1 - \tau_{sp}^- + \tau_{sp}^- \tau_{rq}^- \right) + \langle 1, \eta_q \eta_{pt} \rangle \left(  \tau_{rq}^- + \tau_{sq}^- \tau_{rp}^- \right)\\
& + \langle 1, \eta_u \eta_{pt} \rangle \left(  \tau_{ru}^- + \tau_{su}^- \tau_{rp}^- \right) + \langle 1, \eta_p \eta_{ut} \rangle \left( - \tau_{rp}^- + \tau_{sp}^- \tau_{ru}^- \right)\\
& + \langle 1, \eta_{ut} \eta_{pt} \rangle \left (\frac{1}{\beta} - \beta \tau_{sp}^- \right) + \langle 1, \eta_{qt} \eta_{ut} \rangle \left (-{\beta} \tau_{sq}^- + \beta \tau_{ru}^- \right) + \langle 1, \eta_{qt} \eta_{pt} \rangle \left (\beta \tau_{rp}^- \right).
\end{alignat*}
Therefore, if we choose $\tau_{su}^- =1, \quad \tau_{rq}^- = -1$, other $\tau^-_{\alpha \omega}$ be zero, and $\beta = -1$, we can complete the square for $T^-$
\[
T^- = \langle 1, (\eta_u - \eta_{qt})^2 + \frac{1}{2} (\eta_p + \eta_{qt})^2 + \frac{1}{2} (\eta_{pt} - \eta_q - \eta_{ut})^2 + \frac{1}{2}(\eta_q + \eta_{ut})^2 + \frac{1}{2} \eta_{ut}^2.
\]
We conclude that $T^- \ge 0$. For $T^+$, we have
\begin{alignat*}{1}
T^+ = & \langle 1, \eta_u^2 \rangle \left( -\beta \tau_{su}^+ + \frac{\beta}{2} {\tau_{pu}^+}^2\right) + \langle 1, \eta_q^2 \rangle \left( \beta \tau_{rq}^+ + \frac{\beta}{2} {\tau_{pq}^+}^2\right)\\
& + \langle 1, \eta_{ut}^2 \rangle \left(\frac{1}{2 \beta} + \frac{1}{\beta} \tau_{pu}^+ -\beta \tau_{su}^+ + \frac{\beta}{2} {\tau_{pu}^+}^2\right)
+ \langle 1, \eta_{qt}^2 \rangle \left(\frac{1}{2 \beta} + {\beta} \tau_{rq}^+ + \frac{\beta}{2} {\tau_{pq}^+}^2\right)\\
& + \langle 1, \eta_u \eta_{ut} \rangle \left( -\tau_{ru}^+ - \tau_{ru}^+ \tau_{pu}^+ - \tau_{su}^+ \tau_{ru}^+\right)
+ \langle 1, \eta_q \eta_{qt} \rangle \left( \tau_{pq}^+ - \tau_{rq}^+ \tau_{pq}^+ - \tau_{sq}^+ - \tau_{sq}^+ \tau_{rq}^+\right)\\
& + \langle 1, \eta_u \eta_{q} \rangle \left( -\beta \tau_{sq}^+  + \beta\tau_{ru}^+ + \beta \tau_{pu}^+ \tau_{pq}^+\right)
 + \langle 1, \eta_{ut} \eta_{q} \rangle \left(-1 -\tau_{rq}^+ - \tau_{rq}^+ \tau_{pu}^+ - \tau_{sq}^+ \tau_{ru}^+\right)\\
 & + \langle 1, \eta_{qt} \eta_{u} \rangle \left( \tau_{pu}^+ - \tau_{ru}^+ \tau_{pq}^+ -\tau_{su}^+ - \tau_{su}^+ \tau_{rq}^+\right)\\
& + \langle 1, \eta_{qt} \eta_{ut} \rangle \left( \frac{1}{\beta} \tau_{pq}^+ - \beta \tau_{sq}^+ + \beta \tau_{ru}^+ + \beta \tau_{pu}^+ \tau_{pq}^+\right).
\end{alignat*}
Therefore, if we choose $\tau_{su}^+ =1, \quad \tau_{rq}^+ = -1$, other $\tau^+_{\alpha \omega}$ be zero, then for $\beta=-1$, we see that
\[
T^+ = \langle 1, \eta_u^2 + \eta_{q}^2+\frac{1}{2} \eta_{ut}^2 + \frac{1}{2} \eta_{qt}^2  \rangle.
\]

\end{proof}

\section{Proof of Lemma \ref{lemma_approx}}

\label{sec:appendixB}

We prove it by arguing that the distance between $\varPi$ and the $L^2$-projection is optimal. Indeed, for any $\omega \in L^2(I_i)$, we denote its $L^2$-projection onto $P^k(I_i)$ by $(\omega)_k$. Then we have
\[
\delta_\omega = g_\omega + d_\omega, \quad \mbox{ where } g_\omega := \omega - (\omega)_k, \; d_\omega := (\omega)_k - \varPi \omega, \mbox{ for } \omega \in \{u, q, p, r, s\},
\]
and we only need to estimate $d_\omega$ as $g_\omega$ is of order $k+1$.

Obviously, $d_\omega \in P^k(I_i)$. From the definition of the projection $\varPi$, we have that $(d_\omega, v) =0$ for any $v \in P^{k-1}(I_i)$. This means that
\begin{equation}
\label{eq:disLk}
d_\omega = c_\omega \cdot L_k
\end{equation}
where $L_k$ is the scaled Legendre polynomial of degree $k$. Next, we rewrite the definition of the projection as
\[
\begin{cases}
d_s - \tau_{su}^- d_u - \tau_{sq}^- d_q - \tau_{sp}^- d_p = -\left ( g_s - \tau_{su}^- g_u - \tau_{sq}^- g_q - \tau_{sp}^- g_p \right) & \mbox{ on } x_i^-,\\
d_r - \tau_{ru}^- d_u - \tau_{rq}^- d_q - \tau_{rp}^- d_p = -\left ( g_r - \tau_{ru}^- g_u - \tau_{rq}^- g_q - \tau_{rp}^- g_p \right) & \mbox{ on } x_i^-,\\
d_p + \tau_{pu}^+ d_u + \tau_{pq}^+ d_q = -\left ( g_p + \tau_{pu}^+ g_u + \tau_{pq}^+ g_q \right) & \mbox{ on } x_{i-1}^+,\\
d_s + \tau_{su}^+ d_u + \tau_{sq}^+ d_q = -\left ( g_s + \tau_{su}^+ g_u + \tau_{sq}^+ g_q \right) & \mbox{ on } x_{i-1}^+,\\
d_r + \tau_{ru}^+ d_u + \tau_{rq}^+ d_q = -\left ( g_r + \tau_{ru}^+ g_u + \tau_{rq}^+ g_q \right) & \mbox{ on } x_{i-1}^+.
\end{cases}
\]
Applying \eqref{eq:disLk} and the fact that $L_k(x_i^-) = 1$ and $L_k(x_{i-1}^+) = (-1)^k$, this linear system can be written as
\[
{\mathbb A} \vec{c} = \vec{b}
\]
where
\[
{\mathbb A} =
\left(
\begin{tabular}{ccccc}
1 & 0 & $-\tau_{sp}^-$ & $-\tau_{sq}^-$ & $-\tau_{su}^-$\\
0 & 1 & $-\tau_{rp}^-$ & $-\tau_{rq}^-$ & $-\tau_{ru}^-$\\
0 & 0 & $(-1)^k$ & $(-1)^k \tau_{pq}^+$ & $(-1)^k \tau_{pu}^+$\\
$(-1)^k$ & 0 & 0 & $(-1)^k \tau_{sq}^+$ & $(-1)^k \tau_{su}^+$\\
0 & $(-1)^k$ & 0 & $(-1)^k \tau_{rq}^+$ & $(-1)^k \tau_{ru}^+$
\end{tabular}
\right)
\]
and
\[
\vec{c} =
\left(
\begin{tabular}{c}
$c_s$\\
$c_r$\\
$c_p$\\
$c_q$\\
$c_u$
\end{tabular}
\right),\quad
\vec{b} =
\left(
\begin{tabular}{c}
$b_1$\\
$b_2$\\
$b_3$\\
$b_4$\\
$b_5$
\end{tabular}
\right)
= -\left(
\begin{tabular}{c}
$g_s - \tau_{su}^- g_u - \tau_{sq}^- g_q + \tau_{sp}^- g_p$\\
$g_r - \tau_{ru}^- g_u - \tau_{rq}^- g_q + \tau_{rp}^- g_p$\\
$g_p + \tau_{pu}^+ g_u + \tau_{pq}^- g_q$\\
$g_s + \tau_{su}^+ g_u + \tau_{sq}^- g_q $\\
$g_r + \tau_{ru}^+ g_u + \tau_{rq}^- g_q$
\end{tabular}
\right).
\]
Basic row operation of ${\left( \displaystyle {\mathbb A} | \vec{b} \right)}$ leads to
\[
\left(
\begin{tabular}{ccccc|c}
1 & 0 & $-\tau_{sp}^-$ & $-\tau_{sq}^-$ & $-\tau_{su}^-$ & $b_1$\\
0 & 1 & $-\tau_{rp}^-$ & $-\tau_{rq}^-$ & $-\tau_{ru}^-$ & $b_2$\\
0 & 0 & 1 & $\tau_{pq}^+$ & $\tau_{pu}^+$ & $(-1)^k b_3$\\
0 & 0 & 0 & $\theta_q^s$ & $\theta_u^s$ & $\tilde{b}_4$\\
0 & 0 & 0 & $\theta_q^r$ & $\theta_u^r$ & $\tilde{b}_5$
\end{tabular}
\right),
\]
where $\theta_\omega^\alpha = \tau_{\alpha \omega}^- + \tau_{\alpha \omega}^+ - \tau_{\alpha p}^- \tau_{p \omega}^+$,
$\tilde{b}_4 = -b_1 + (-1)^k b_4 - (-1)^k \tau_{sp}^- b_3$, and $\tilde{b}_5 = -b_2 + (-1)^k b_5 - (-1)^k \tau_{rp}^- b_3$.
We can then invoke the Cramer's rule to solve for $\vec{c}$ when the linear system is uniquely solvable, that is when
\[
\Delta := \theta_q^s\theta_u^r - \theta_u^s\theta_q^r \neq 0.
\]
Under this condition, we have
\begin{alignat*}{1}
&c_q = \frac{\tilde{b}_4 \theta_u^r - \tilde{b}_5 \theta_u^s}{\Delta},\\
&c_u = \frac{\tilde{b}_5 \theta_q^s - \tilde{b}_4 \theta_q^r}{\Delta},\\
&c_p = (-1)^kb_3 - \tau_{pq}^+c_q - \tau_{pu}^+c_u,\\
&c_r = b_3+\tau_{rp}^-c_p + \tau_{rq}^-c_q + \tau_{ru}^-c_u,\\
&c_s = b_1+\tau_{sp}^-c_p + \tau_{sq}^-c_q + \tau_{su}^-c_u.
\end{alignat*}
We conclude that $\lVert c_\omega \rVert \le C h^{k+1}$ thanks to the fact that $\lVert g_\omega \rVert \le C h^{k+1}$ for any $\omega \in H^{k+1}(I_i)$. This completes the proof of Lemma \ref{lemma_approx}.

\begin{remark}
The choice of $\tau$ in the error analysis is that the only nonzero stabilization functions are $\tau_{su}^- = \tau_{su}^+ =1, \,\, \tau_{sq}^- = \tau_{sq}^+ = -1$. For this case, we have $\theta_q^s = 0, \, \theta_u^s = 2, \, \theta_q^r = -2$. This means that $\Delta = 4 \neq 0$.
\end{remark}

\section{Proof of Lemma \ref{lemma_init}}

\label{sec:appendixC}

The structure and techniques for proving the accuracy of the scheme \eqref{eq:schemest} are similar to \cite{ChenCockburnDongHDG16}. For completeness and brevity of this appendix, we are listing here all the results but only provide a sketch of each proof. The main results are superconvergence of the projection of the error and that of the numerical fluxes, as stated in the following two theorems.

\begin{theorem} [Error estimate for the linear steady-state fifth-order equation]
\label{thm:errorst}
Let $\omega$ be the solution to the steady-state equation \eqref{eq:equationst} with $F=0$, $\alpha=0$ and $\beta=-1$, $\varPi$ be the projection defined by \eqref{eq:proj}, and $\omega_h$ be the approximate solution given by \eqref{eq:schemest}, the projection of the error $\varepsilon_\omega$ satisfy
\begin{equation*}
%\label{eq:errorst}
\lVert \varepsilon_\omega \rVert \le C h^{k+1 + \min{\{k,1\}}} \quad\mbox{ for } \omega \in \{u, q, p, r, s\}
\end{equation*}
\end{theorem}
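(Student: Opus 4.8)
The plan is to adapt the projection-based analysis of \cite{ChenCockburnDongHDG16} to the five-field system \eqref{eq:equationst}. \emph{Step 1 (error equations).} Since $(u,q,p,r,s)$ and their single-valued restrictions to $\Eh$ satisfy the HDG equations \eqref{eq:schemest} (consistency), I subtract \eqref{eq:schemest} from its exact counterpart, write $e_\omega=\delta_\omega+\varepsilon_\omega$, and invoke both the orthogonality of $\varPi$ against $[P_{k-1}]^5$ and the trace identities built into \eqref{eq:proj}. This is precisely the steady analogue of the manipulation done at the start of Appendix \ref{sec:appendixA}: defining the projected flux errors as there one gets $\widehat{\varepsilon}_\omega=\widehat{e}_\omega$, the transmission conditions give $\jmp{\widehat{\varepsilon}_p\cdot n}=\jmp{\widehat{\varepsilon}_r\cdot n}=\jmp{\widehat{\varepsilon}_s\cdot n}=0$, and the error equations take the clean form of \eqref{eq:error} with no time derivatives and with a $\gamma\varepsilon_u$ term in place of $\varepsilon_{u_t}$; the only data are the projection errors $\delta_\omega$, which are $O(h^{k+1})$ by Lemma \ref{lemma_approx}.

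\emph{Step 2 (preliminary energy estimate of order $h^{k+1}$).} Mimicking the stability proof of Theorem \ref{thm:stability}, I test the error equations with $\psi=\varepsilon_u$, $v=-(\alpha\varepsilon_p+\beta\varepsilon_s)$, $z=\alpha\varepsilon_q+\beta\varepsilon_r$, $\phi=\beta\varepsilon_q$, $w=-\beta\varepsilon_p$ and add. The volume cross-terms telescope to $\gamma\lVert\varepsilon_u\rVert^2$; inserting the flux definitions \eqref{eq:fluxes} with the prescribed $\tau$'s and using periodicity, the boundary terms collapse — exactly as in the computation of $T$ in Appendix \ref{sec:appendixA} — into a nonnegative sum of squares of the jumps $\eta_\omega:=\widehat{\varepsilon}_\omega-\varepsilon_\omega$, while the right-hand side is a sum of pairings $(\delta_\omega,\varepsilon_\bullet)$. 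Using the remaining error equations to control $\lVert\varepsilon_q\rVert,\lVert\varepsilon_p\rVert,\lVert\varepsilon_r\rVert,\lVert\varepsilon_s\rVert$ in terms of $\lVert\varepsilon_u\rVert$, the jump norms and the $\delta_\omega$'s, I would obtain
\[
\lVert\varepsilon_\omega\rVert+\Bigl(\textstyle\sum\nolimits_{\omega'}\lVert\eta_{\omega'}\rVert_{\partial\calT_h}^{2}\Bigr)^{1/2}\le C\,h^{k+1},\qquad \omega\in\{u,q,p,r,s\},
\]
which already settles Theorem \ref{thm:errorst} in the case $k=0$.

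\emph{Step 3 (the crux: duality with the Green's function of the adjoint problem).} For $k>0$ and a fixed component $\omega$, I let $\Phi$ solve the periodic adjoint equation $\gamma\Phi+\Phi_{xxxxx}=\varepsilon_\omega$ (the fifth-order term is skew-adjoint, so its sign flips under transposition), recast it as a first-order system with four auxiliary variables, and represent $\Phi$ through the Green's function of this operator. Periodic elliptic regularity (via Fourier series, using $\gamma>0$) gives $\lVert\Phi\rVert_{5}\le C\lVert\varepsilon_\omega\rVert$. Writing $\lVert\varepsilon_\omega\rVert^{2}=(\varepsilon_\omega,\gamma\Phi+\Phi_{xxxxx})$, integrating by parts, and feeding the projection of $\Phi$ and of its auxiliary variables into the Step-1 error equations, all volume contributions recombine and every surviving term is a product of a Step-2 quantity of size $O(h^{k+1})$ with a factor coming from the adjoint side that, by the $H^{2}$-regularity of $\Phi$ and the approximation properties of $\varPi$, is $O(h^{\min\{k,1\}})$ — no gain when $k=0$, a full power of $h$ when $k>0$. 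Hence $\lVert\varepsilon_\omega\rVert^{2}\le C\,h^{\,k+1+\min\{k,1\}}\lVert\varepsilon_\omega\rVert$, which is the assertion.

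Steps 1 and 2 are essentially the steady specialization of computations already performed in Appendices \ref{sec:appendixA}--\ref{sec:appendixB}, and I expect them to be routine. The real difficulty is the bookkeeping in Step 3: exactly as in the third-order case of \cite{ChenCockburnDongHDG16}, one must check that, for \emph{every} $\omega\in\{u,q,p,r,s\}$, the many boundary and flux terms produced by the duality identity assemble so that the gained factor $h^{\min\{k,1\}}$ is never lost, which is where the specific stabilization $\tau_{su}^{\pm}=1$, $\tau_{rq}^{\pm}=-1$ (all others zero) is used. Pushed one step further, the same Green's-function analysis also yields the pointwise trace superconvergence $\lVert\widehat{e}_\omega\rVert_{\infty,\Eh}\le C h^{2k+1}$ needed in Lemma \ref{lemma_init}, but that is beyond the statement proved here.
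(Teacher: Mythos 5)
Your overall architecture (error equations plus a duality argument for the adjoint fifth-order operator) is the same as the paper's, and your Step 1 and the $h^{\min\{k,1\}}$ gain in Step 3 are essentially what the paper does. The genuine gap is Step 2, on which your Step 3 leans. The energy identity obtained from the test functions $\psi=\varepsilon_u$, $v=-(\alpha\varepsilon_p+\beta\varepsilon_s)$, etc., reads $\gamma\lVert\varepsilon_u\rVert^2 + T = -S$, where $S$ contains volume pairings such as $(\delta_q,\alpha\varepsilon_p+\beta\varepsilon_s)$ and $(\delta_s,\beta\varepsilon_q)$: it controls only $\lVert\varepsilon_u\rVert$ and the boundary jumps, and only in terms of the \emph{other} components $\varepsilon_q,\varepsilon_p,\varepsilon_r,\varepsilon_s$, which the identity does not control. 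Your proposed fix --- ``using the remaining error equations to control $\lVert\varepsilon_q\rVert,\dots$ in terms of $\lVert\varepsilon_u\rVert$ and the jumps'' --- does not work at the advertised rate: from $({\varepsilon}_q, v) + (\delta_q, v) + (\varepsilon_u, v_x) - \langle \widehat{\varepsilon}_u, v \cdot n \rangle = 0$ with $v=\varepsilon_q$ one pays an inverse estimate $\lVert(\varepsilon_u)_x\rVert\le Ch^{-1}\lVert\varepsilon_u\rVert$ and a trace factor $h^{-1/2}$, so each step down the chain $u\to q\to p\to r\to s$ loses powers of $h$. As stated, Step 2 therefore does not deliver $\lVert\varepsilon_\omega\rVert\le Ch^{k+1}$ for all five components, and the ``Step-2 quantity of size $O(h^{k+1})$'' invoked in Step 3 is unproved.

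The paper avoids this entirely: it proves no preliminary energy estimate. It sets $\lVert\varepsilon\rVert^2:=\sum_\omega(\varepsilon_\omega,\eta_\omega)$ with $(\eta_u,\dots,\eta_s)$ the data of a five-field dual system, introduces a \emph{dual projection} $\varPi^*$ whose trace conditions are matched to those of $\varPi$ and of the numerical traces so that the entire boundary contribution ($\theta_3$ in the paper's notation) vanishes identically, and arrives at $\lVert\varepsilon\rVert^2\le Ch\lVert\varepsilon\rVert\lVert\eta\rVert + Ch^{k+1+\min\{k,1\}}\lVert\eta\rVert$; taking $\eta_\omega=\varepsilon_\omega$, the first term is absorbed into the left-hand side for $h$ small (a kickback), which bounds all five components simultaneously with no a priori estimate needed. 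If you reorganize your Step 3 this way --- replacing the reliance on Step 2 by absorption of the $O(h)\lVert\varepsilon\rVert^2$ term, and replacing the ad hoc bookkeeping of boundary terms by the construction of $\varPi^*$ --- your argument closes. Note also that the vanishing of the boundary terms is achieved by the design of $\varPi^*$ for general admissible stabilization functions, not specifically by the choice $\tau_{su}^\pm=1$, $\tau_{rq}^\pm=-1$; that choice is what makes $T\ge 0$ in the time-dependent analysis and $\Delta\neq 0$ in Lemma \ref{lemma_approx}.
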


\begin{theorem} [Superconvergence of the numerical traces]
\label{thm:fluxst}
Under the assumption of Theorem \ref{thm:errorst},
the error of the numerical traces $\widehat{e}_\omega (x_i)$ satisfy
\begin{equation*}
%\label{eq:errorst_flux}
|\widehat{e}_\omega (x_i)| \le C h^{2 k + 1} \quad \mbox{ for } \omega \in \{u, q, p, r, s\}.
\end{equation*}
\end{theorem}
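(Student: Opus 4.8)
\noindent\textbf{Proof proposal for Theorem \ref{thm:fluxst}.}
The plan is a duality (Aubin--Nitsche) argument based on the Green's functions of the stationary fifth-order operator, in the spirit of the superconvergence analysis in \cite{ChenCockburnDongHDG16}. Fix a node $x_j$ and a component $\omega_0\in\{u,q,p,r,s\}$. I would introduce the adjoint first-order system associated with \eqref{eq:equationst} for $F=0$, $\alpha=0$, $\beta=-1$,
\[
\varphi^q+\varphi^u_x=0,\quad \varphi^p+\varphi^q_x=0,\quad \varphi^r+\varphi^p_x=0,\quad \varphi^s+\varphi^r_x=0,\quad \gamma\varphi^u+\varphi^s_x=\Lambda^{\omega_0}_j,
\]
with periodic boundary conditions, where $\Lambda^{\omega_0}_j$ is a distribution supported at $x_j$ --- a Dirac mass or one of its first four derivatives --- chosen so that pairing the primal error equations with the adjoint variables reproduces precisely the trace error $\widehat e_{\omega_0}(x_j)$ on the left-hand side. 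The solution $\varphi=(\varphi^u,\varphi^q,\varphi^p,\varphi^r,\varphi^s)$ is expressed through the Green's function $G_j$ of the constant-coefficient periodic fifth-order operator, which is available in closed form. The structural fact I would exploit is that, because the singular support of $\Lambda^{\omega_0}_j$ lies exactly at a mesh node, every $\varphi^\bullet$ is $C^\infty$ on each element $I_i$, with broken norms $\sum_i\|\varphi^\bullet\|_{H^{k+1}(I_i)}^2$ bounded uniformly in $h$ for $h$ small.

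Next I would derive an error identity. The steady analogue of the error equations \eqref{eq:error} follows by subtracting \eqref{eq:schemest} from the equations satisfied by the exact solution. Testing those with the HDG projections $\varPi\varphi^\bullet$, testing the weak form of the adjoint system with the primal errors $e_\bullet=\varepsilon_\bullet+\delta_\bullet$, integrating by parts, and adding, the action of $\Lambda^{\omega_0}_j$ isolates $\widehat e_{\omega_0}(x_j)$, while the matching definitions of the numerical traces in \eqref{eq:fluxes} and of the projected flux errors $\widehat\varepsilon_\bullet$ make the interface contributions collapse. Invoking the defining orthogonalities of $\varPi$ in \eqref{eq:proj}, one is left with an identity of the schematic form
\[
\widehat e_{\omega_0}(x_j)=\sum_{a,b}\bigl(\delta_a^{\mathrm{primal}},\,\delta_b^{\varphi}\bigr)+\sum_{a,b}\bigl\langle\delta_a^{\mathrm{primal}},\,\delta_b^{\varphi}\,n\bigr\rangle_{\partial\calT_h}+R,
\]
where $R$ collects the terms still containing $\varepsilon_\bullet^{\mathrm{primal}}$.

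Finally I would estimate each group. By Lemma \ref{lemma_approx}, $\|\delta_\omega^{\mathrm{primal}}\|\le Ch^{k+1}$, and by the $h$-uniform broken-norm bound on $\varphi$ the same holds for $\|\delta_\omega^{\varphi}\|$, so the volume pairings are $O(h^{2k+2})$. The interface pairings are handled with the Cauchy--Schwarz inequality on $\partial\calT_h$ together with the standard trace estimate $\|\delta_\omega\|_{L^2(\partial\calT_h)}\le Ch^{k+1/2}$ for both the primal and the dual projection errors, which gives $O(h^{2k+1})$ --- the bottleneck. The residual terms in $R$ are controlled by Theorem \ref{thm:errorst}, $\|\varepsilon_\omega\|\le Ch^{k+1+\min\{k,1\}}$, paired against dual projection errors, and are again $O(h^{2k+1})$ or better. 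Since the constants are uniform over the finitely many nodes $x_j$ and the five components $\omega_0$, this yields $|\widehat e_\omega(x_i)|\le Ch^{2k+1}$.

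I expect the main obstacle to be twofold. First, one must choose the five functionals $\Lambda^{\omega_0}_j$ correctly: since $\widehat q_h$ and $\widehat p_h^{\;-}$ are independent degrees of freedom rather than literal nodal derivatives of $\widehat u_h$, identifying which distributional source at $x_j$ extracts which trace error requires care, and one must then verify that the associated Green's functions --- including the less regular ones arising from derivatives of the Dirac mass --- together with their $\varPi$-projection errors obey the $h$-uniform bounds used above. Second, the bookkeeping is heavy: producing the clean identity above from the full set of error equations and checking, after using the orthogonalities of $\varPi$, that no surviving term is worse than $O(h^{2k+1})$ is lengthy though essentially routine. These are precisely the computations the appendix would carry out in detail.
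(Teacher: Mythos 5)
Your Green's-function duality argument is precisely the approach the paper intends: the introduction states that the trace superconvergence is obtained ``by using a special duality argument and Green's functions,'' and Appendix C omits the proof of Theorem \ref{thm:fluxst} altogether, declaring it ``very similar to that for the third-order equation'' in \cite{ChenCockburnDongHDG16}. Since the paper records no details, your sketch cannot be checked line-by-line against an in-paper argument, but its strategy --- an adjoint system driven by a nodal point source, piecewise-smooth Green's functions with $h$-uniform broken norms, and pairings of primal and dual projection errors --- is the same one the cited reference carries out, and your self-identified gaps (choice of the five functionals and the bookkeeping of the surviving terms) are exactly the computations the paper chose not to reproduce.
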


The proof of \ref{thm:fluxst} is very similar to that for the third-order equation \cite{ChenCockburnDongHDG16}. We thus omit it and prove Theorem \ref{thm:errorst} before moving to fulfill the main charge of  this Appendix - establishing the main lemma \ref{lemma_init}.

\subsection{Proof for Theorem \ref{thm:errorst}.}

The proof adopts a duality argument, see also \cite{ChenCockburnDongHDG16}. For that purpose, we first introduce the dual problem
\begin{alignat}{1}
\label{eq:dualprob}
&\Psi_u' - \Psi_q \,=\, \eta_s, \quad \Psi_q' - \Psi_p\,=\, -\eta_r,\quad \\
&\Psi_p' - \Psi_r \,=\, \eta_p, \quad \Psi_r' - \Psi_s \,=\, -\eta_q, \quad \beta \Psi_s' + \gamma \Psi_u\,=\,\eta_u  \quad\quad {\textrm {in} }\;  \Omega,\nonumber\\
&\Psi_\omega(0, t) = \Psi_\omega(L, t) \quad \mbox{ for } \omega \in \{u, q, p, r, s\},\nonumber
\end{alignat}
where $\Psi'$ denotes $\Psi_x$ for any $\Psi$. Then we define a projection $\varPi^*: \left[ H^1(\calT_h) \right]^5 \longrightarrow \left[ W_h^k \right]^5$ that is dual of projection $\varPi$ defined by \eqref{eq:proj}:
For $(u, q, p, r, s) \in \left[ H^1(\calT_h) \right]^5$, we find in $\left[ W_h^k \right]^5$ the element $$(\varPi^* u, \varPi^* q, \varPi^* p, \varPi^* r, \varPi^* s)$$ whose restriction on element $K$ satisfies the following condition
\[
\begin{cases}
(\delta^* \Psi_u, v)_{I_i} = 0, \,\, (\delta^* \Psi_q, z)_{I_i} = 0, \,\, (\delta^* \Psi_p, w)_{I_i} = 0, \,\, & \\
(\delta^* \Psi_r, \phi)_{I_i} = 0, \,\, (\delta^* \Psi_s, \psi)_{I_i} = 0, \qquad\qquad& \\
\delta^*\Psi_p^- - \tau_{rp}^- \delta \Psi_q n + \tau_{sp}^- \delta \Psi_u n = 0 & \mbox{ at } x_{i}^-\\
\delta^*\Psi_r^- + \tau_{rq}^- \delta \Psi_q n - \tau_{sq}^- \delta \Psi_u n = 0 & \mbox{ at } x_{i}^-\\
\delta^*\Psi_s^- - \tau_{ru}^- \delta \Psi_q n + \tau_{su}^- \delta \Psi_u n = 0 & \mbox{ at } x_{i}^-\\
\delta^*\Psi_r^+ + \tau_{rq}^+ \delta \Psi_q n - \tau_{sq}^+ \delta \Psi_u n - \tau_{pq}^+ \delta \Psi_p n= 0 & \mbox{ at } x_{i-1}^+\\
\delta^*\Psi_s^+ - \tau_{ru}^+ \delta \Psi_q n + \tau_{su}^+ \delta \Psi_u n + \tau_{pu}^+ \delta \Psi_p n= 0 & \mbox{ at } x_{i-1}^+
\end{cases}
\]
for any $(v, z, w, \phi, \psi) \in P_{k-1}(I_i).$

We have a lemma on the regularity of the dual problem, and a lemma on the approximation properties of the dual projection $\varPi^*$.
\begin{lemma}[Regularity of the dual problem]
For the dual problem \eqref{eq:dualprob}, we have that
\[
\lVert \Psi_u \rVert_1 + \lVert \Psi_q \rVert_1  + \lVert \Psi_p \rVert_1  + \lVert \Psi_r \rVert_1 + \lVert \Psi_s \rVert_1 \le C_{\rm reg} \lVert \eta \rVert,
\]
where $\lVert \eta \rVert = \lVert \eta_u \rVert + \lVert \eta_q \rVert + \lVert \eta_p \rVert + \lVert \eta_r \rVert + \lVert \eta_s \rVert$.
\end{lemma}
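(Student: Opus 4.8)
The plan is to reduce the asserted $H^{1}$ bound to an $L^{2}$ bound on the five components $\Psi_{u},\dots,\Psi_{s}$, and then to establish the $L^{2}$ bound by Fourier analysis on the periodic interval $\Omega=(0,L)$. The reduction is essentially free: the equations \eqref{eq:dualprob} express all the derivatives algebraically, namely $\Psi_{u}'=\Psi_{q}+\eta_{s}$, $\Psi_{q}'=\Psi_{p}-\eta_{r}$, $\Psi_{p}'=\Psi_{r}+\eta_{p}$, $\Psi_{r}'=\Psi_{s}-\eta_{q}$ and $\Psi_{s}'=\beta^{-1}(\eta_{u}-\gamma\Psi_{u})$, so once $\lVert\Psi_{u}\rVert+\lVert\Psi_{q}\rVert+\lVert\Psi_{p}\rVert+\lVert\Psi_{r}\rVert+\lVert\Psi_{s}\rVert\le C\lVert\eta\rVert$ is known, each $\lVert\Psi_{\omega}'\rVert$ is controlled by $\lVert\eta\rVert$ and those $L^{2}$ norms, and the lemma follows. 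Hence I would focus entirely on the $L^{2}$ estimate.

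For that, I would expand $\eta_{\omega}$ and $\Psi_{\omega}$ in the Fourier basis $\{e^{2\pi imx/L}\}_{m\in\mathbb{Z}}$ and write $\xi=\xi_{m}=2\pi m/L$. On the $m$-th mode \eqref{eq:dualprob} is a $5\times5$ linear algebraic system, and eliminating $\Psi_{q},\Psi_{p},\Psi_{r},\Psi_{s}$ through $\widehat{\Psi_{q}}=i\xi\widehat{\Psi_{u}}-\widehat{\eta_{s}}$, $\widehat{\Psi_{p}}=i\xi\widehat{\Psi_{q}}+\widehat{\eta_{r}}$, and so on, leaves
\[
\bigl(\gamma+\beta(i\xi)^{5}\bigr)\,\widehat{\Psi_{u}}(m)=\widehat{\eta_{u}}(m)+\beta\bigl[(i\xi)^{4}\widehat{\eta_{s}}(m)-(i\xi)^{3}\widehat{\eta_{r}}(m)+(i\xi)^{2}\widehat{\eta_{p}}(m)-i\xi\,\widehat{\eta_{q}}(m)\bigr].
\]
The place where the hypotheses $\gamma>0$ and $\beta<0$ are used is that the symbol never vanishes: $|\gamma+\beta(i\xi)^{5}|=\sqrt{\gamma^{2}+\beta^{2}\xi^{10}}\ge\gamma>0$. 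Substituting $\widehat{\Psi_{u}}$ back into the relations for $\widehat{\Psi_{q}},\dots,\widehat{\Psi_{s}}$ and collecting terms, I expect a short recursion to show that
\[
\widehat{\Psi_{\omega}}(m)=\sum_{*\in\{u,q,p,r,s\}}\frac{P_{\omega,*}(i\xi)\,\widehat{\eta_{*}}(m)}{\gamma+\beta(i\xi)^{5}},\qquad\deg P_{\omega,*}\le4,
\]
the mechanism being that the ostensibly degree-$7$ or degree-$8$ monomials produced by the back-substitution cancel against the $(i\xi)^{k}\widehat{\eta_{*}}$ corrections; for instance the $\eta_{s}$-contribution to $\widehat{\Psi_{q}}$ collapses to $-\gamma\widehat{\eta_{s}}/(\gamma+\beta(i\xi)^{5})$. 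Granting this, the rest is routine: since every $P_{\omega,*}$ has degree at most $4$ while the denominator is $\gtrsim|\xi|^{5}$ for large $|\xi|$ and $\ge\gamma$ near $0$, each multiplier $(1+\xi^{2})^{1/2}|P_{\omega,*}(i\xi)|/|\gamma+\beta(i\xi)^{5}|$ is uniformly bounded in $\xi$, and Parseval then gives $\lVert\Psi_{\omega}\rVert_{1}\le C\lVert\eta\rVert$ with $C$ depending only on $\gamma,\beta,L$.

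The step I expect to need the most care is the bookkeeping in that recursion, because the bound $\deg P_{\omega,*}\le4$ is exactly sharp for our purpose — a numerator of degree $5$ would already destroy the single derivative being claimed — so the cancellations are essential and I would verify them by working through all five back-substitutions explicitly. If one prefers to avoid Fourier series, the same conclusion follows from standard ODE theory: write \eqref{eq:dualprob} as a first-order system $\bld{\Psi}'=M\bld{\Psi}+\bld{g}$ with $\lVert\bld{g}\rVert\le C\lVert\eta\rVert$, observe that the characteristic polynomial of the companion matrix $M$ is $\lambda^{5}+\gamma/\beta$, whose roots satisfy $\lambda^{5}=-\gamma/\beta>0$ and therefore all have nonzero real part, so $I-e^{ML}$ is invertible and the periodic boundary value problem is uniquely solvable; variation of parameters then yields an $L^{\infty}$ — hence $L^{2}$ — bound on $\bld{\Psi}$ in terms of $\lVert\bld{g}\rVert$, and the $H^{1}$ bound follows as in the first paragraph.
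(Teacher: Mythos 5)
Your argument is correct, but it takes a genuinely different route from the paper. The paper's proof is a real-variable energy argument: it first controls the means (integrating the last equation of \eqref{eq:dualprob} over $\Omega$ gives $\gamma\bar{\Psi}_u=\bar{\eta}_u$, and periodicity fixes the means of the other components), then chains Poincar\'e inequalities through the five equations to obtain $\lVert\Psi_\omega\rVert\le\frac{\gamma}{|\beta|}\lVert\Psi_u\rVert+C\lVert\eta\rVert$ for every $\omega$, and finally closes the loop by testing the system against $(\beta\Psi_s,-\beta\Psi_r,\beta\Psi_p,-\beta\Psi_q,\Psi_u)$, which yields the identity $\gamma\lVert\Psi_u\rVert^2=\beta\int_\Omega(\Psi_s\eta_s+\Psi_r\eta_r+\Psi_p\eta_p+\Psi_q\eta_q+\Psi_u\eta_u)\,dx$ and hence, after a small-$\epsilon$ Young inequality, $\lVert\Psi_u\rVert\le C\lVert\eta\rVert$; the $H^1$ seminorms then come from the equations exactly as in your first paragraph. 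Your Fourier-multiplier computation replaces all of this with the explicit symbol $\gamma+\beta(i\xi)^5$, whose modulus $\sqrt{\gamma^2+\beta^2\xi^{10}}\ge\max\{\gamma,|\beta|\,|\xi|^5\}$, and the degree count $\deg P_{\omega,*}\le 4$ that you flag as the delicate step does check out: for instance the $\eta_s$-coefficient of $\widehat{\Psi_q}$ collapses to $-\gamma/(\gamma+\beta(i\xi)^5)$ as you predict, and the analogous cancellations occur in $\widehat{\Psi_p}$, $\widehat{\Psi_r}$ and $\widehat{\Psi_s}$, so every weighted multiplier $(1+\xi^2)^{1/2}|P_{\omega,*}(i\xi)|/|\gamma+\beta(i\xi)^5|$ is uniformly bounded. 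What your route buys is that it proves existence and uniqueness of the periodic dual solution at the same time as the estimate, something the paper's a priori bound leaves implicit; what it costs is an essential reliance on constant coefficients and periodicity, whereas the paper's energy identity is the kind of argument that survives lower-order or variable-coefficient perturbations. Your ODE alternative (nonvanishing real parts of the fifth roots of $-\gamma/\beta$, hence invertibility of $I-e^{ML}$) is also sound. Both arguments produce a constant with the same $1/\gamma$ dependence arising from the zero mode, so neither is sharper than the other for the purposes of Lemma \ref{lemma_init}.
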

\begin{lemma}
For the dual projection $\varPi^*$, we have that
\[
\lVert \delta^* \Psi_\omega \rVert \le C h \lVert \Psi_\omega \rVert_1, \quad \mbox{ for } \omega \in \{u, q, p, r, s\}.
\]
\end{lemma}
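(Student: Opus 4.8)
\emph{Proof proposal.} The argument runs in complete parallel to the proof of Lemma~\ref{lemma_approx} in Appendix~\ref{sec:appendixB}, the only modification being that $\Psi_\omega$ now belongs merely to $H^1(\calT_h)$, so the $L^2$-projection estimates are used in their first-order form. Fix an element $I_i$ and, for each $\omega\in\{u,q,p,r,s\}$, decompose
\[
\delta^*\Psi_\omega = g^*_\omega + d^*_\omega,\qquad g^*_\omega := \Psi_\omega-(\Psi_\omega)_k,\quad d^*_\omega := (\Psi_\omega)_k-\varPi^*\Psi_\omega,
\]
where $(\cdot)_k$ denotes the $L^2(I_i)$-projection onto $P^k(I_i)$. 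Standard one-dimensional approximation theory supplies $\lVert g^*_\omega\rVert_{I_i}\le Ch_i\lVert\Psi_\omega\rVert_{1,I_i}$ together with the endpoint bound $|g^*_\omega(x_{i-1}^+)|+|g^*_\omega(x_i^-)|\le Ch_i^{1/2}\lVert\Psi_\omega\rVert_{1,I_i}$; the latter follows from $\lVert v-(v)_k\rVert_{L^\infty(I_i)}\le Ch_i^{1/2}\lVert v\rVert_{1,I_i}$, obtained by combining a Poincar\'e estimate with the inverse inequality on $P^k(I_i)$. It therefore remains only to estimate $d^*_\omega$.

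Since the first five (volume) conditions in the definition of $\varPi^*$ force all moments of $d^*_\omega$ of order $\le k-1$ to vanish, we have $d^*_\omega=c^*_\omega L_k$ with $L_k$ the scaled Legendre polynomial of degree $k$ on $I_i$, for which $L_k(x_i^-)=1$, $L_k(x_{i-1}^+)=(-1)^k$ and $\lVert L_k\rVert_{I_i}\le Ch_i^{1/2}$. Substituting $d^*_\omega=c^*_\omega L_k$ into the remaining five (trace) conditions and transferring the $g^*_\omega$-contributions to the right-hand side, exactly as in Appendix~\ref{sec:appendixB}, turns those conditions into a $5\times5$ linear system $\mathbb{A}^*\,\vec{c}^{\,*}=\vec{b}^{\,*}$ in the unknowns $\vec{c}^{\,*}=(c^*_s,c^*_r,c^*_p,c^*_q,c^*_u)^\top$. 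The matrix $\mathbb{A}^*$ is assembled solely from the piecewise-constant stabilization parameters and the numbers $\pm1$, hence is independent of $h$; moreover it coincides, up to permutation of rows and columns and harmless sign factors $(-1)^k$, with the transpose of the matrix $\mathbb{A}$ of Appendix~\ref{sec:appendixB}, so $\det\mathbb{A}^*=\pm\Delta$ with $\Delta=\theta_q^s\theta_u^r-\theta_u^s\theta_q^r$, which equals $4\neq0$ for the stabilization choice used in the error analysis. Consequently $\mathbb{A}^*$ is invertible with an $h$-uniform bound on its inverse. The entries of $\vec{b}^{\,*}$ are linear combinations of the endpoint values $g^*_\omega(x_{i-1}^+),g^*_\omega(x_i^-)$, so $|\vec{b}^{\,*}|\le Ch_i^{1/2}\sum_\omega\lVert\Psi_\omega\rVert_{1,I_i}$, and Cramer's rule yields $|c^*_\omega|\le Ch_i^{1/2}\sum_\omega\lVert\Psi_\omega\rVert_{1,I_i}$; hence $\lVert d^*_\omega\rVert_{I_i}=|c^*_\omega|\,\lVert L_k\rVert_{I_i}\le Ch_i\sum_\omega\lVert\Psi_\omega\rVert_{1,I_i}$.

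Adding the two bounds gives $\lVert\delta^*\Psi_\omega\rVert_{I_i}\le Ch_i\sum_{\omega'}\lVert\Psi_{\omega'}\rVert_{1,I_i}$ on each element; squaring, summing over $i=1,\dots,N$ and using $h_i\le h$ then produces $\lVert\delta^*\Psi_\omega\rVert\le Ch\sum_{\omega'}\lVert\Psi_{\omega'}\rVert_1$, which is the asserted estimate (up to the harmless sum, which is anyway all that is needed when the lemma is fed into the duality argument). \emph{Main obstacle.} The only step beyond routine bookkeeping of powers of $h$ is verifying that $\mathbb{A}^*$ is invertible with an $h$-independent inverse; this is exactly where one uses the structural observation that the matrix of the dual projection $\varPi^*$ is essentially the transpose of that of $\varPi$, together with the explicit value $\Delta=4$ recorded at the end of Appendix~\ref{sec:appendixB} for the chosen stabilization functions.
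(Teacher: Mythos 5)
Your proof is correct and follows exactly the route the paper intends: the paper dispenses with this lemma by remarking that the argument is ``very similar to that of $\varPi$'' in Appendix~\ref{sec:appendixB}, and your write-up is a faithful adaptation of that argument (Legendre decomposition, the same $5\times5$ system with determinant $\pm\Delta=\pm4\neq0$, and $H^1$-level trace and $L^2$-projection bounds in place of the $H^{k+1}$ ones). The only cosmetic deviation is that your final estimate carries $\sum_{\omega'}\lVert\Psi_{\omega'}\rVert_1$ on the right rather than the single $\lVert\Psi_\omega\rVert_1$, which, as you note, is unavoidable since $\varPi^*$ couples the components and is all that the duality argument requires via the regularity lemma.
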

The proof of the second lemma is very similar to that of $\varPi$, therefore we only prove the first lemma here.
We integrate the last equation of the dual problem \eqref{eq:dualprob} over the spatial domain $\Omega$. Thanks to periodicity, we have
\[
\gamma \bar{\Psi}_u = \bar{\eta}_u, \quad \mbox{ where } \bar{\omega} := \int_\Omega \omega dx \quad \mbox{ for any } \omega \in L^2(\Omega),
\]
which implies that $\lVert \bar{\Psi}_u \rVert = \lVert \gamma^{-1} \bar{\eta}_u \rVert \le \frac{C}{\gamma} \lVert \eta_u \rVert$.
By Poincare inequality and the first equation of \eqref{eq:dualprob}, we have $\lVert \Psi_u - \bar{\Psi}_u \rVert \le C \lVert \Psi_u' \rVert \le \lVert \Psi_q \rVert + \lVert \eta_s \rVert$ which means
\[
\lVert \Psi_u \rVert \le \lVert \Psi_q \rVert + \lVert \eta_s \rVert + \frac{C}{\gamma} \lVert \eta_u \rVert.
\]
Similarly, we have
\begin{alignat*}{1}
\lVert \Psi_q \rVert & \le \lVert \Psi_p \rVert + \lVert \eta_r \rVert + C \lVert \eta_s \rVert\\
\lVert \Psi_p \rVert & \le \lVert \Psi_r \rVert + \lVert \eta_p \rVert + C \lVert \eta_r \rVert\\
\lVert \Psi_r \rVert & \le \lVert \Psi_s \rVert + \lVert \eta_q \rVert + C \lVert \eta_p \rVert\\
\lVert \Psi_s \rVert & \le \frac{1}{|\beta|} \left( \gamma \lVert \Psi_u \rVert + \lVert \eta_u \rVert \right )+ C \lVert \eta_q \rVert.
\end{alignat*}
We conclude that
\begin{equation}
\label{eq:omegabyu}
\lVert \Psi_\omega \rVert \le \frac{\gamma}{|\beta|} \lVert \Psi_u \rVert + C \lVert \eta \rVert \quad \mbox{ for } \omega \in \{u, q, p, r, s\}.
\end{equation}

Next, we multiply the five equations of \eqref{eq:dualprob} by $(\beta \Psi_s, -\beta \Psi_r, \beta \Psi_p, -\beta \Psi_q, \Psi_u)$ respectively, integrate them on $\Omega$, and add all five together to obtain
\begin{alignat*}{1}
\gamma\|\Psi_u \|^2 = & \beta \int_\Omega \left( \Psi_s \eta_s + \Psi_r \eta_r + \Psi_p \eta_p + \Psi_q \eta_q + \Psi_u \eta_u \right) dx\\
\le & \frac{| \beta | \epsilon }{2} \left( \lVert \Psi_s \rVert^2 + \lVert \Psi_r \rVert^2 + \lVert \Psi_p \rVert^2 +\lVert \Psi_q \rVert^2 + \lVert \Psi_u \rVert^2 \right) + \frac{|\beta| \epsilon^{-1}}{2} \lVert \eta \rVert^2.
\end{alignat*}
We then invoke \eqref{eq:omegabyu} and let $\epsilon$ be small enough to arrive at $\lVert \Psi_u \rVert \le C \lVert \eta \rVert$ which implies, due to \eqref{eq:omegabyu}, that
\[
\lVert \Psi_\omega \rVert \le C \lVert \eta \rVert  \quad \mbox{ for } \omega \in \{u, q, p, r, s\}.
\]
Finally to conclude the proof of this lemma, we note from the first equation of \eqref{eq:dualprob} that
\[
| \Psi_u |_1 \le \lVert \Psi_q \rVert + \lVert \eta_s \rVert \le C \lVert \eta \rVert.
\]
Similarly, $| \Psi_\omega |_1 \le C \lVert \eta \rVert$ for $\omega \in \{q, p, r, s\}$.

With these two lemmas proven, we now go back to prove Theorem \ref{thm:errorst}. The first \rev{ingredient} we need is the error equation that is similar to those for the time-dependent case \eqref{eq:error1} - \eqref{eq:error5}. For simplicity,we assume that $\beta = -1$.
%\begin{subequations}
%\begin{alignat}{1}
\begin{equation}\label{eq:errorst}
\begin{split}
%\label{eq:error1st}
&({\varepsilon}_q, v) + (\delta_q, v) + (\varepsilon_u, v') - \langle \widehat{\varepsilon}_u, v \cdot n \rangle = 0,\\
%\label{eq:error2st}
&({\varepsilon}_p, z) + (\delta_p, z) + (\varepsilon_q, z') - \langle \widehat{\varepsilon}_q, z \cdot n \rangle = 0,\\
%\label{eq:error3st}
&({\varepsilon}_r, w) + (\delta_r, w) + (\varepsilon_p, w') - \langle \widehat{\varepsilon}_p, w \cdot n \rangle = 0,\\
%\label{eq:error4st}
&({\varepsilon}_s, \phi) + (\delta_s, \phi) + (\varepsilon_r, \phi') - \langle \widehat{\varepsilon}_r, \phi \cdot n \rangle = 0,\\
%\label{eq:error5st}
&(\gamma {\varepsilon}_{u}, \psi) + (\gamma \delta_{u}, \psi) + (\varepsilon_s, \psi') - \langle \widehat{\varepsilon}_s, \psi \cdot n \rangle = 0.
\end{split}
\end{equation}
%\end{alignat}
%\end{subequations}

{\bf The proof of theorem \ref{thm:errorst}} is through the classical duality argument. Indeed, we define
\[
\lVert \varepsilon \rVert^2 := (\varepsilon_u, \eta_u) + (\varepsilon_q, \eta_q) + (\varepsilon_p, \eta_p) + (\varepsilon_r, \eta_r) + (\varepsilon_s, \eta_s)
\]
Using  the dual problem \eqref{eq:dualprob}, we can rewrite it as $\lVert \varepsilon \rVert^2 = \Theta_I + \Theta_{II}$ where
\begin{alignat*}{1}
\Theta_{I} & = (\varepsilon_u, \Psi_s') - (\varepsilon_q, \Psi_r') + (\varepsilon_p, \Psi_p') - (\varepsilon_r, \Psi_q') + (\varepsilon_s, \Psi_u')\\
\Theta_{II} & = (\varepsilon_u, \gamma \Psi_u) + (\varepsilon_q, \Psi_s) - (\varepsilon_p, \Psi_r) + (\varepsilon_r, \Psi_p) - (\varepsilon_s, \Psi_q)
\end{alignat*}
Invoking $$(\varepsilon_\omega, \Psi_v') = (\varepsilon_\omega, \varPi^* \Psi_v') + (\varepsilon_\omega, \delta^* \Psi_v') = (\varepsilon_\omega, \varPi \Psi_v') + \langle \varepsilon_\omega, \delta^* \Psi_v \cdot n \rangle$$ by using integration by parts and the definition of $\varPi^*$, we can further rewrite $\Theta_I$ as $\Theta_I = I_1 + I_2$ with
\begin{alignat*}{1}
I_1 & = (\varepsilon_u, \varPi^* \Psi_s') - (\varepsilon_q, \varPi^* \Psi_r') + (\varepsilon_p, \varPi^* \Psi_p') - (\varepsilon_r, \varPi^* \Psi_q') + (\varepsilon_s, \varPi^* \Psi_u')\\
I_2 & = \langle \varepsilon_u, \delta^* \Psi_s \cdot n \rangle - \langle \varepsilon_q, \delta^* \Psi_r \cdot n \rangle + \langle \varepsilon_p, \delta^* \Psi_p \cdot n \rangle - \langle \varepsilon_r, \delta^* \Psi_q \cdot n \rangle + \langle \varepsilon_s, \delta^* \Psi_u \cdot n \rangle.
\end{alignat*}
We now apply the error equations \eqref{eq:errorst} %\eqref{eq:error1st} - \eqref{eq:error5st} 
with $v = \varPi^* \Psi_s, z = \varPi^* \Psi_r, w = \varPi^* \Psi_p, \phi = \varPi^* \Psi_q, \psi = \varPi^* \Psi_u$ to rewrite $I_1$ as
\begin{alignat*}{1}
I_1 = & -(\varepsilon_q, \varPi^* \Psi_s) + (\varepsilon_p, \varPi^* \Psi_r) - (\varepsilon_r, \varPi^* \Psi_p) + (\varepsilon_s, \varPi^* \Psi_q) - (\gamma \varepsilon_u, \varPi^* \Psi_u)\\
& -(\delta_q, \varPi^* \Psi_s) + (\delta_p, \varPi^* \Psi_r) - (\delta_r, \varPi^* \Psi_p) + (\delta_s, \varPi^* \Psi_q) - (\gamma \delta_u, \varPi^* \Psi_u)\\
& + \langle \widehat{\varepsilon}_u, \varPi^* \Psi_s \cdot n \rangle - \langle \widehat{\varepsilon}_q, \varPi^* \Psi_r \cdot n \rangle + \langle \widehat{\varepsilon}_p, \varPi^* \Psi_p \cdot n \rangle - \langle \widehat{\varepsilon}_r, \varPi^* \Psi_q \cdot n \rangle + \langle \widehat{\varepsilon}_s, \varPi^* \Psi_u \cdot n \rangle.
\end{alignat*}
Adding $I_1$, $I_2$, and $\Theta_{II}$, keeping in mind that $\widehat{\varepsilon}_\omega$ and $\Psi_\omega$ are single-valued and periodic for $\omega \in \{u, q, p, r, s\}$, and performing simple manipulations, we have
\[
\lVert \varepsilon \rVert^2 = \theta_1 + \theta_2 + \theta_3,
\]
where
\begin{alignat*}{1}
\theta_1 = & (\gamma \varepsilon_u, \delta^* \Psi_u) + (\varepsilon_q, \delta^* \Psi_s) - (\varepsilon_p, \delta^* \Psi_r) + (\varepsilon_r, \delta^* \Psi_p) - (\varepsilon_s, \delta^* \Psi_q),\\
\theta_2 = & -(\delta_q, \varPi^* \Psi_s) + (\delta_p, \varPi^* \Psi_r) - (\delta_r, \varPi^* \Psi_p) + (\delta_s, \varPi^* \Psi_q) - (\gamma \delta_u, \varPi^* \Psi_u),\\
\theta_3 =&  - \langle \delta^* \Psi_s, (\widehat{\varepsilon}_u - \varepsilon_u)\cdot n \rangle + \langle \delta^* \Psi_r, (\widehat{\varepsilon}_q - \varepsilon_q)\cdot n \rangle - \langle \delta^* \Psi_p, (\widehat{\varepsilon}_p - \varepsilon_p)\cdot n \rangle \\
&+ \langle \delta^* \Psi_q, (\widehat{\varepsilon}_r - \varepsilon_r)\cdot n \rangle - \langle \delta^* \Psi_u, (\widehat{\varepsilon}_s - \varepsilon_s)\cdot n \rangle.
\end{alignat*}
We next estimate $\theta_i$'s one by one. We bound $\theta_1$ using the approximation property of $\varPi^*$.
\[
|\theta_1| \le C h \left( \lVert \varepsilon_u \rVert + \lVert \varepsilon_q \rVert + \lVert \varepsilon_p \rVert  + \lVert \varepsilon_r \rVert  + \lVert \varepsilon_s \rVert \right) \lVert \eta \rVert.
\]
We estimate $\theta_2$ by realizing that $(\delta_v, \varPi^* \Psi_w) = (\delta_v, -\delta^* \Psi + \delta^{k-1} \Psi_w)$. Here $\delta^{k-1} \Psi_w := \Psi_w - (\Psi_w)_{k-1}$ with $\lVert \delta^{k-1} \Psi_\omega \rVert \le C h^{\min{\{1,k\}}} \lVert \eta \rVert$ for $\omega \in \{u, q, p, r, s\}$. Due to this bound and the properties of the projections $\varPi$ and $\varPi^*$, we have
\[
| \theta_2 | \le C h^{k+1+\min{\{1,k\}}} \lVert \eta \rVert.
\]
Lastly, we realize that $\theta_3 = 0$ thanks to the definition of $\widehat{\varepsilon}_s$, $\widehat{\varepsilon}_r$, $\widehat{\varepsilon}_p^-$ and $\varPi^*$.

To conclude, we take $\eta_\omega = \varepsilon_\omega$ for $\omega \in \{u, q, p, r, s\}$ to obtain
\[
\lVert \varepsilon \rVert^2 \le |\theta_1 | + |\theta_2| \le Ch \lVert \varepsilon \rVert^2 + C h^{k+1+\min{\{k,1\}}} \lVert \varepsilon \rVert.
\]
When $h$ is small enough, we have that $\lVert \varepsilon \rVert \le C h^{k+1+\min{\{k,1\}}}$.

\subsection{Proof of the Lemma \ref{lemma_init}.}

We take $t = 0$ and $\psi = \varepsilon_{ut}(0)$ in \eqref{eq:error5} to obtain
\[
(\varepsilon_{ut}(0), \varepsilon_{ut}(0)) + (\delta_{ut}(0), \varepsilon_{ut}(0)) - (\beta \varepsilon_s(0), \varepsilon_{ut}(0)_x) + \langle \beta \widehat{\varepsilon}_s(0), \varepsilon_{ut}(0) \cdot n \rangle = 0
\]
By Cauchy inequality, inverse inequality, and the trace inequality, we have
\[
\lVert \varepsilon_{ut}(0) \rVert^2 \le C \lVert \delta_{ut}(0) \rVert \lVert \varepsilon_{ut}(0) \rVert + C h^{-1} \lVert \varepsilon_{s}(0) \rVert \lVert \varepsilon_{ut}(0) \rVert + C h^{-1/2} \lVert \widehat{\varepsilon}_{s}(0) \rVert_{\partial \Omega_h} \lVert \varepsilon_{ut}(0) \rVert
\]
which means
\begin{equation}
\label{eq:fm2estimate}
\lVert \varepsilon_{ut}(0) \rVert^2 \le C \lVert \delta_{ut}(0) \rVert^2 + C h^{-2} \lVert \varepsilon_{s}(0) \rVert^2 + C h^{-1} \lVert \widehat{\varepsilon}_{s}(0) \rVert_{\partial \Omega_h}.
\end{equation}
The estimates in Theorems \ref{thm:errorst} and \ref{thm:fluxst} imply that
\[
\lVert \varepsilon_s(0) \rVert \le C h^{k+2}, \quad \lVert \widehat{\varepsilon}_s(0) \rVert \le C h^{2k+1}.
\]
\rev{Moreover}, the properties of the projection $\varPi$ states that $\lVert \delta_{ut}(0) \rVert \le C h^{k+1}$.
Plugging these three estimates into \eqref{eq:fm2estimate} gives
\[
\lVert \varepsilon_{ut}(0) \rVert \le C h^{k+1},
\]
which concludes the proof of the lemma.

\end{document}